\theoremstyle{plain}
\newtheorem{theorem}{Theorem}[section]
\newtheorem{corollary}[theorem]{Corollary}
\theoremstyle{definition}
\newtheorem{definition}[theorem]{Definition}
\newtheorem{example}[theorem]{Example}
\theoremstyle{remark}
\newcommand\mystyle{\everymath{\displaystyle}}
\begin{document}

\articletype{Research Article}

\title{Long-Lasting and Slowly Varying Transient Dynamics in Discrete-Time Systems}

\author{
\name{Anthony Pasion\textsuperscript{a, b}
and Felicia Maria Magpantay \textsuperscript{a}\thanks{CONTACT F.~M.~Magpantay Email: felicia.magpantay@queensu.ca}\thanks{This article has been accepted for publication in \emph{Journal of Difference Equations and Applications}, published by Taylor \& Francis. The final published version will be available through the journal.}}
\affil{\textsuperscript{a}Department of Mathematics and Statistics, Queen’s University, Kingston, Ontario, Canada; \textsuperscript{b}Department of Mathematics and Computer Science, University of the Philippines Baguio, Baguio City, Philippines}
}

\maketitle


\begin{abstract}
    Analysis of mathematical models in ecology and epidemiology often focuses on asymptotic dynamics, such as stable equilibria and periodic orbits. However, many systems exhibit long transient behaviors where certain aspects of the dynamics remain in a slowly evolving state for an extended period before undergoing rapid change. In this work, we analyze long-lasting and slowly varying transient dynamics in discrete-time systems based on extensions of previous theoretical frameworks developed for continuous-time systems. This involves clarifying the conditions under which we say an observable of the system exhibits prolonged transients, and deriving criteria for characterizing these dynamics. Our results show that specific points in the state space, analogous to previously defined transient centers in continuous-time systems, can generate and sustain long transients in discrete-time models. We demonstrate how these properties manifest in predator-prey models and epidemiological systems, particularly when populations or disease prevalence remain low for extended intervals before sudden shifts. 
\end{abstract}

\begin{amscode}
    39A05; 39A60; 37N25
\end{amscode}

\begin{keywords}
Long-Transient Dynamics; Discrete-Time Systems; Transient Points; Transient Centers
\end{keywords}

\section{Introduction}

The dynamics of ecological and epidemiological systems are often described with a focus on their long-term behavior. However, recent advances have highlighted that many such systems exhibit transient behavior that can persist for ecologically relevant time spans before transitioning to a qualitatively different regime. These long transients can have significant implications for forecasting, management, and conceptual understanding of population persistence, extinction, and disease dynamics. As such, there is growing interest in developing a systematic mathematical framework to characterize and predict long-lasting transient dynamics. 

A foundational body of work led by Hastings et al. has emphasized the importance of transient dynamics in ecology \cite{hastings2004transients,hastings2018transient,morozov2020long,morozov2024long}. They highlighted that system trajectories may remain far from their attractors for a prolonged period, potentially leading to misleading conclusions if only asymptotic dynamics are considered. Long transients have been observed in models of predator-prey systems, population models with dispersal-driven dynamics, and models of epidemic outbreaks. These behaviors may arise from mechanisms such as crawl-by dynamics, ghost attractors, chaotic repellers, chaotic saddles or invariant manifolds \cite{hastings2004transients,hastings2018transient,morozov2020long,morozov2024long, morozov2023long, morozov2024regime}.

In an attempt to give these ideas a more technical mathematical framework, Liu and Magpantay (2022) introduced the notions of transient points, transient centers, and reachable transient centers for continuous-time systems governed by ordinary differential equations (ODEs)\cite{liu2022quantification}. These are points in state space around which an observable quantity evolves very slowly for a long time. These ideas were further developed in \cite{liu2023framework} and used to gain insight into transient phenomena in ecology such as the honeymoon period of a disease \cite{mclean1988measles,kharazian2020honeymoon} or the temporary collapse of the population in predator-prey interactions as seen in \cite{hastings2018transient}. 

The Liu et al. framework \cite{liu2022quantification, liu2023framework} was developed in the setting of model systems governed by ODEs. However, many biological systems are more naturally modeled using discrete-time dynamical systems governed by difference equations or maps. These include systems with discrete generations, seasonal forcing terms, or pulsed interventions. Discrete-time models are also known to exhibit rich and varied transient dynamics, which may not have clear analogues in continuous-time, especially when governed by discontinuous or piecewise-defined maps \cite{morozov2023long,morozov2024regime}.  

The purpose of this work is to extend the framework developed by Liu et al. \cite{liu2022quantification,liu2023framework} to systems governed by discrete-time maps. Here we define and characterize points in the state space that generate slowly varying and long-lasting transient dynamics in discrete-time systems. Our results show that, just as in continuous-time models, discrete-time systems can feature special points in the state space where the evolution of an observable of the system is nearly stationary for extended periods before undergoing sudden transitions. These theoretical results are applied to various ecological and epidemiological models. By grounding our results in a rigorous mathematical setting and linking them to some well-documented transient phenomena in the literature, we hope that this work will contribute to the ongoing development of a unified mathematical theory of long transients across modeling frameworks. 

The paper is divided into five (5) sections. In Section 2, we introduce precise mathematical definitions of concepts such as transient points and transient centers adapted specifically for discrete-time systems. We also establish foundational results on their properties and derive necessary conditions for their existence. In Section 3, we examine conditions under which fixed points can serve as transient centers and provide clear criteria for their identification based on system stability and observable properties. In Section 4, we apply the theoretical framework we have developed to specific ecological and epidemiological models. Transient phenomena such as prolonged population collapses and honeymoon periods observed in disease outbreaks are explicitly illustrated. We end the paper by summarizing its main  contributions and lay out our future directions that build on these results.

\section{Notation and Preliminary Results}

We denote the set of all positive integers by $\mathbb{Z}^+$, the set of all nonnegative integers by $\mathbb{Z}^+_0$, and the $n$-dimensional real and complex Euclidean space by $\mathbb{R}^n$ and $\mathbb{C}^n$, respectively. We use the notation $\|\cdot \|$ to denote the Euclidean norm on $\mathbb{R}^n$ or $\mathbb{C}^n$. For $r>0$ and $x \in \mathbb{R}^n$ (or $x \in \mathbb{C}^n$), we let $B_r(x)=\{x~:~\|x\|<r\}$. If $n=1$, we simply write the spaces $\mathbb{R}^1$ and $\mathbb{C}^1$ as $\mathbb{R}$ and $\mathbb{C}$, respectively and use the symbol $|\cdot |$ to denote the modulus of a point in $\mathbb{R}$ or $\mathbb{C}$. We write $\mathbb{R}^{n\times n}$ for the set of all $n\times n$ real matrices. For integers $t_1$ and $t_2$, we use the shorthand $t_1\mathbin{:}t_2=\{\,t\in\mathbb{Z}:\ t_1\le t\le t_2\,\}$ for brevity. Hence, the notation $t\in 0\mathbin{:}T$ means that $t=0,1,\dots,T$. Let $C^k(\mathbb{R}^n, \mathbb{R}^n)$ be the space of all functions $f:~\mathbb{R}^n \to \mathbb{R}^n$ for which all partial derivatives up to order $k$ exist, for $k\in\mathbb{Z}^+_0$, and are continuous. In particular, $C(\mathbb{R}^n, \mathbb{R}^n) = C^0(\mathbb{R}^n, \mathbb{R}^n)$ denotes the space of continuous functions. Lastly, for a map $f\in C^1(\mathbb{R}^n,\mathbb{R}^n)$ and a point $x\in\mathbb{R}^n$, we use the notation $Df(x)$ to denote the Jacobian matrix of $f$ at the point $x$. 

Let us now consider the following discrete-time system given by an autonomous recursion equation,
\begin{align}
\begin{cases}
	x(t+1) = f(x(t)) & \\
	x(0) = \xi, &
\end{cases}  \label{dtades}
\end{align}
where $f \in C(\mathbb{R}^n, \mathbb{R}^n)$ and $\xi \in \mathbb{R}^n$. Since we are dealing with discrete-time systems, we always assume that $t\in \mathbb{Z}^+_0$ unless otherwise specified. For each $\xi \in \mathbb{R}^n$, we denote by $f^t(\xi)$ the value at time $t$ of the solution to \eqref{dtades} starting at $\xi$. Then $f^t(\xi) = f(f^{t-1}(\xi))$ and $f^0(\xi) = \xi$. The continuity of $f$ is sufficient to guarantee the existence and uniqueness of solution for each initial state $\xi \in \mathbb{R}^n$ (see for instance Section 4.2 of \cite{elaydi2005introduction}). In this work, the terms trajectory and orbit are used interchangeably to refer to the sequence of states a system visits over time, starting from an initial point $\xi$, i.e. the set $\{f^t(\xi)\}_{t\in\mathbb{Z}^+_0}$. 

The following definitions of transient points and transient centers adapt the framework by Liu et al. \cite{liu2022quantification,liu2023framework} to discrete-time systems. 

\begin{definition}[Transient Points]
Let $\xi \in \mathbb{R}^n$, $v \in C(\mathbb{R}^n, \mathbb{R})$, $s > 0$ and $T \in \mathbb{Z}^+$. Define the difference operator of $v$ along the solutions to \eqref{dtades} as 
	\begin{align*}
	    \Delta v (x) = v(f(x)) - v(x).
	\end{align*}
	Now, define  
	\begin{align*}
	T_s(\xi) = \begin{cases}
		\inf \{ t \in \mathbb{Z}^+_0 ~:~ | \Delta v(f^t(\xi)) | > s \}, & \text{if the set is non-empty,} \\
		\infty, & \text{otherwise}.
	\end{cases}
	\end{align*}
We call $T_s(\xi)$ the $(v,s)$-transient time of $\xi$ and we say that $\xi$ is a $(v, s, T)$-transient point if $T < T_s(\xi) < \infty$.

\end{definition}

\begin{definition}[Transient Centers] \label{defn: transient center}
	Let $v \in C(\mathbb{R}^n, \mathbb{R})$. We call $x_* \in \mathbb{R}^n $ a $v$-transient center if there exists $S > 0$ such that for all $0 < s < S$ and all $T \in \mathbb{Z}^+$, there exists a $(v, s, T)$-transient point in every neighborhood of $x_*$.
\end{definition}

 \begin{figure}
    \centering
    \includegraphics[width=7cm]{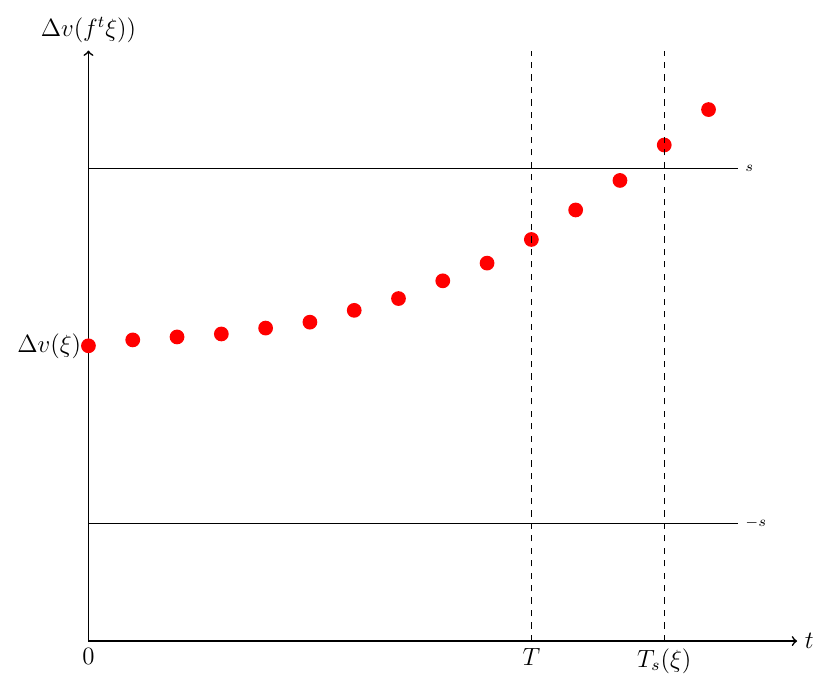}
    \caption{Illustration of the behaviour of $\Delta v(x(t))$ starting from a  $(v, s, T)$-transient point $\xi \in \mathbb{R}^n$.}
    \label{fig: transient}
\end{figure}

It is common to model systems wherein not all the states are observed or measured. We define an \textit{observable function} $v: \mathbb{R}^n \to \mathbb{R}$, which captures some quantity of interest evolving along system trajectories. 
Examples of such observables include the number of infected individuals in epidemic models, the density of prey or predator populations in ecological predator-prey models, total biomass in food-web models, or the fraction of vaccinated individuals in disease models with vaccination. Throughout the remainder of this paper, we assume that $v \in  C(\mathbb{R}^n, \mathbb{R})$ unless stated otherwise. We are interested in long and slow transient behaviours displayed by the observable $v$. A $(v,s,T)$-transient point is a state $\xi$ where the rate of change of the observable $v$ remains small for a long time. Specifically, $\Delta v$ stays below a threshold $s$ for at least time $T$. This means that the system exhibits slow evolution in terms of the observable $v$ before eventually experiencing a more significant rate of change. On the other hand, a transient center is a point in state space where arbitrarily slow transient behaviors can occur arbitrarily close to that point. Specifically, this means for any threshold $s$, no matter how small, there are states arbitrarily close to $x_*$ where $v$ changes slower than $s$ for longer than any given time interval $T$, no matter how long. Transient centers act as organizing structures in the dynamics, where the system's observable experiences prolonged phases of near-stationary behavior before transitioning to different regimes. Intuitively, if $x_*$ is a transient center, then the system exhibits long transient dynamics near $x_*$, meaning that trajectories of the observable originating close to $x_*$ remain in a slowly evolving phase for extended durations before eventually undergoing more noticeable changes. 

Several foundational results we establish in this section, including necessary conditions and invariance properties, are direct discrete-time analogues of the continuous-time results presented by Liu et al. in \cite{liu2023framework}. However, this paper also introduces new results that are tailored for discrete-time maps. These new results provide simplified and useful criteria for identifying transient centers, significantly facilitating their verification in discrete-time models. We now proceed to rigorously present these theoretical results.

\begin{theorem} \label{thm: zero}
 If $x_*$ is a $v$-transient center, then
 \begin{enumerate}
\item [(i)] $x_*$ is also an $(\alpha v + \beta)$-transient center for any $\alpha \neq0$ and $\beta \in \mathbb{R}$,
\item [(ii)] $\Delta v(x_*) = v(f(x_*)) - v(x_*) = 0$, and
\item [(iii)] $f^t(x_*)$ is also a $v$-transient center for all $t \in \mathbb{Z}^+_0$.
 \end{enumerate}
\end{theorem}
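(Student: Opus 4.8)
The plan is to treat the three parts mostly independently, exploiting two clean identities: that the difference operator is linear under affine reparametrization of the observable, and that it commutes with the time-shift along orbits. For part (i), writing $w = \alpha v + \beta$, I would first compute $\Delta w(x) = w(f(x)) - w(x) = \alpha\bigl(v(f(x)) - v(x)\bigr) = \alpha\,\Delta v(x)$, so that $|\Delta w(x)| = |\alpha|\,|\Delta v(x)|$. Denoting by $T^{v}_{s}$ and $T^{w}_{s}$ the transient times with respect to $v$ and $w$, this yields the exact identity $T^{w}_{s}(\xi) = T^{v}_{s/|\alpha|}(\xi)$, so a $(w,s,T)$-transient point is precisely a $(v, s/|\alpha|, T)$-transient point. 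If $S$ is the threshold witnessing that $x_*$ is a $v$-transient center, then setting $S' = |\alpha| S$ and noting that $0 < s < S'$ forces $0 < s/|\alpha| < S$, the defining property transfers verbatim: every neighborhood of $x_*$ contains a $(v, s/|\alpha|, T)$-transient point, which is the sought $(w,s,T)$-transient point.

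For part (ii) I would argue by contradiction. Suppose $\Delta v(x_*) \neq 0$ and set $\delta = |\Delta v(x_*)| > 0$. Since $f$ and $v$ are continuous, $\Delta v(x) = v(f(x)) - v(x)$ is continuous, so for any fixed $s$ with $0 < s < \min(S,\delta)$ there is a neighborhood $U$ of $x_*$ on which $|\Delta v| > s$. Then every $\xi \in U$ satisfies $|\Delta v(f^0(\xi))| = |\Delta v(\xi)| > s$, whence $T_s(\xi) = 0$, so $\xi$ cannot be a $(v,s,T)$-transient point for any $T \geq 1$ (which would require $T_s(\xi) > T \geq 1$). Choosing $T = 1$ contradicts the transient-center property, which guarantees a $(v,s,1)$-transient point inside $U$. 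Hence $\Delta v(x_*) = 0$.

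For part (iii) it suffices to prove that $f(x_*)$ is a $v$-transient center and then induct, the base case $t = 0$ being trivial. Given a neighborhood $V$ of $f(x_*)$, continuity of $f$ produces a neighborhood $U$ of $x_*$ with $f(U) \subseteq V$. Keeping the same threshold $S$, fix $0 < s < S$ and $T \in \mathbb{Z}^+$; since $x_*$ is a $v$-transient center, $U$ contains a $(v, s, T+1)$-transient point $\xi$, so $T + 1 < T_s(\xi) < \infty$. Setting $\eta = f(\xi) \in V$ and using the orbit identity $f^t(\eta) = f^{t+1}(\xi)$, I would read off that $|\Delta v(f^t(\eta))| = |\Delta v(f^{t+1}(\xi))|$ first exceeds $s$ exactly when $t + 1 = T_s(\xi)$, giving $T_s(\eta) = T_s(\xi) - 1$. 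Therefore $T < T_s(\eta) < \infty$, so $\eta$ is the required $(v,s,T)$-transient point in $V$, and applying this single-step result with $x_*$ replaced by $f^{t-1}(x_*)$ completes the induction.

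I expect the main obstacle to lie in the bookkeeping of part (iii): one must invoke the transient-center hypothesis at the shifted horizon $T+1$ rather than $T$, and verify that the time-shift decreases the transient time by \emph{exactly} one (not merely bounds it), so that finiteness $T_s(\eta) < \infty$ is preserved together with the lower bound $T_s(\eta) > T$. By comparison, parts (i) and (ii) reduce quickly once the identity $\Delta(\alpha v + \beta) = \alpha\,\Delta v$ and the continuity of $\Delta v$ are established.
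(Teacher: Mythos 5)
Your proposal is correct and follows essentially the same route as the paper's proof: part (i) via the identity $\Delta(\alpha v+\beta)=\alpha\,\Delta v$ with a rescaled threshold, part (ii) by contradiction using continuity of $\Delta v$ to force $T_s(\xi)=0$ on a neighborhood, and part (iii) by induction using continuity of $f$ and invoking the hypothesis at horizon $T+1$ so that the image point has transient time exceeding $T$. If anything, you are slightly more careful than the paper in spelling out the threshold bookkeeping (the choice $S'=|\alpha|S$ in (i), the use of $s<\min(S,\delta)$ in (ii), and the exact decrement $T_s(f(\xi))=T_s(\xi)-1$ in (iii)), but these are refinements of the same argument rather than a different approach.
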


\begin{proof}
\textit{Proof of (i).} 
We use the notation $T^v_s(\xi)$ to denote the $(v,s)$-transient time of $\xi$.
Let $\tilde{v} = \alpha v +\beta$ where $\alpha \neq0$ and $\beta \in \mathbb{R}$. 
If $\xi \in \mathbb{R}^n$ is a $(v, s, T)$-transient point then $T <T^v_s(\xi) < \infty$. Clearly, $\xi$ is also a $(\tilde{v}, s/|\alpha|, T)$-transient point since its $(\tilde{v}, s/|\alpha|)$-transient time $T^{\tilde{v}}_{s/|\alpha|}(\xi) = T^v_s(\xi)$ from the fact that $\Delta \tilde{v} = \alpha \Delta v$. It is now straightforward from Definition \ref{defn: transient center} that $x_*$ is also a $\tilde{v}$-transient center. 

\textit{Proof of (ii).} 
Let $|\Delta v(x_*)| =| v(f(x_*)) - v(x_*) |= c$ and assume that $c\neq 0$. Since $v$ and $f$ are both continuous, the function $\Delta v = v(f(x))-v(x)$ is also continuous on $\mathbb{R}^n$. By the definition of continuity, there exists a neighborhood $U$ of $x_*$ such that for all $x \in U$, 
	\begin{align*}
		|\Delta v(x) - \Delta v(x_*)|  < \frac{c}{2}.
	\end{align*}
	Rearranging gives us $| \Delta v(x) | > \frac{c}{2}$ for all $x \in U$. 
Now, set $s = \frac{c}{2}$. By the definition of a transient center, there must exist a $(v, s, T)$-transient point arbitrarily close to $x_*$. However, in the neighborhood $U$, we have $|\Delta v(x)| > s$ or in other words, $T_s(x)=0$ which contradicts this requirement. Thus we must have $c = 0$. 

\textit{Proof of (iii).} We proceed by induction on $t$. By assumption, $x_*$ is a $v$-transient center, so the statement holds for $t = 0$ since $f^0(x_*) = x_*$. We now assume that for some $t \in \mathbb{Z}^+_0$, $f^t(x_*)$ is a $v$-transient center and show that  $f^{t+1}(x_*)$ is also a $v$-transient center. Let $C$ be an arbitrary neighborhood of $f^{t+1}(x_*)$. 
The continuity of $f$ implies the existence of a neighborhood $B$ of $f^t(x_*)$ such that $f(B) \subset C$. By the inductive hypothesis, there exists a $(v, s, T+1)$-transient point $x$ in $B$. By applying $f$, we see that $f(x) \in C$ and satisfies the condition of being a $(v, s, T)$-transient point. Since $C$ was an arbitrary neighborhood of $f^{t+1}(x_*)$, we conclude that $f^{t+1}(x_*)$ is a $v$-transient center. This completes the induction step. It follows that $f^t(x_*)$ is a $v$-transient center for all $t \in \mathbb{Z}^+_0$.
\end{proof}

The previous theorem provides a characterization of transient centers in discrete-time systems. Theorem \ref{thm: zero})(i) tells us that the property of being a transient center does not depend on the particular units or origin of the observable $v$, as long as you are only scaling or shifting $v$. For example, if $v$ represents a certain population component of a model system, then it does not matter whether the population is in hundreds, thousands or even subtracted from some baseline value. Theorem \ref{thm: zero})(ii) states that if $x_*$ is a transient center, then the observable function $v$ does not change at $x_*$ under iterations of the system, that is,  $v(f(x_*)) = v(x_*)$.  Note that the converse does not hold in general. This is easy to see by considering the case where $x_*$ is a fixed point of $f$, i.e., $f(x_*) = x_*$. In this case, we have $\Delta v(x_*) = v(f(x_*)) - v(x_*) = v(x_*) - v(x_*) = 0$. However, this does not imply that transient points accumulate around $x_*$ in the manner required by the definition of a transient center. In the next section, we shall develop rigorous criteria for identifying which fixed points act as transient centers. Finally, Theorem \ref{thm: zero})(iii) shows that if $x_*$ is a $v$-transient center, then every forward iterate $f^t(x_*)$ for all $t \in \mathbb{Z}^+_0$ remains a transient center. This means that once a point exhibits transient center behavior, this property persists along its forward trajectory under $f$. Combining this result with Theorem \ref{thm: zero})(ii), it follows that $v$ remains constant along the entire trajectory $\{ f^t(x_*) \}_{t \in \mathbb{Z}^+_0}$. Although transient centers are not necessarily fixed points of $f$, they behave similarly with respect to the observable $v$. This implies that transient centers form an invariant set under $f$ where $v$ does not change significantly over time. If $v$ represents our quantity of interest, then neighborhoods of transient centers are regions where this quantity can change very slowly over time.  
 
Let us now define the following set,
\begin{align}
	X^v =  \{ \xi \in \mathbb{R}^n~|~ \Delta v(f^t(\xi)) =0~\text{for all}~t \in \mathbb{Z}_0^+\} \label{X}
\end{align}
consisting of all points where the observable function $v$ remains constant under iteration. The set $X^v$ contains candidate $v$-transient centers. The following theorems provide useful criteria to determine which elements of the set $X^v$ defined in \eqref{X} are indeed transient centers.

\begin{theorem} \label{thm: transient center criterion}
	Let $x_* \in X^v$. If there exists $S > 0$ such that for all $s \in(0,S)$, in any neighborhood of $x_*$, there is an $x$ such that $T_s(x) < \infty$, then $x_*$ is a $v$-transient center.
\end{theorem}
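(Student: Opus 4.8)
The plan is to unpack Definition \ref{defn: transient center} directly and manufacture, for each prescribed piece of data, a transient point arbitrarily close to $x_*$. Concretely, I would fix an arbitrary neighborhood $W$ of $x_*$, an arbitrary $s\in(0,S)$, and an arbitrary $T\in\mathbb{Z}^+$, and aim to exhibit a single point $x\in W$ satisfying $T<T_s(x)<\infty$, i.e. a $(v,s,T)$-transient point. The difficulty to keep in view is that the hypothesis supplies only the upper bound $T_s(x)<\infty$ for \emph{some} nearby point (the observable eventually moves faster than $s$), whereas being a transient point also requires the lower bound $T_s(x)>T$ (the observable must stay slower than $s$ for at least time $T$). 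The entire argument is about securing both inequalities for one and the same point.

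The lower bound is where the assumption $x_*\in X^v$ enters. Since $\Delta v(f^t(x_*))=0$ for every $t\in\mathbb{Z}^+_0$, in particular the finitely many functions $\Delta v\circ f^0,\dots,\Delta v\circ f^{T}$ all vanish at $x_*$. Each of these is continuous, since $f$ and $v$ are continuous and hence so are $\Delta v=v\circ f-v$ and every iterate $f^t$. By continuity at $x_*$, for each index $t\in 0\mathbin{:}T$ there is a neighborhood on which $|\Delta v(f^t(x))|<s$; intersecting these $T+1$ neighborhoods yields a single neighborhood $V$ of $x_*$ on which $|\Delta v(f^t(x))|<s$ holds simultaneously for all $t\in 0\mathbin{:}T$. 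For any $x\in V$, none of the times $0,\dots,T$ lies in the set $\{t:\ |\Delta v(f^t(x))|>s\}$ defining $T_s(x)$, which forces $T_s(x)>T$.

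To close the argument I would intersect the two neighborhoods: $V\cap W$ is again a neighborhood of $x_*$, so the hypothesis, applied to this neighborhood and to the fixed $s\in(0,S)$, produces a point $x\in V\cap W$ with $T_s(x)<\infty$. Because $x\in V$ we simultaneously have $T_s(x)>T$, and because $x\in W$ the point sits inside the prescribed neighborhood. Hence $T<T_s(x)<\infty$, so $x$ is a $(v,s,T)$-transient point lying in $W$. Since $W$, $s$, and $T$ were arbitrary, Definition \ref{defn: transient center} is satisfied and $x_*$ is a $v$-transient center. I expect the only genuine subtlety to be the bookkeeping in this final step: one must extract the point guaranteed by the hypothesis from the shrunken neighborhood $V\cap W$ rather than from $W$ alone, so that it inherits the slow-evolution bound $T_s(x)>T$ instead of merely lying in $W$.
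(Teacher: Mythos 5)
Your proposal is correct and follows essentially the same approach as the paper's proof: both use continuity of $\Delta v\circ f^t$ for $t\in 0\mathbin{:}T$ together with $x_*\in X^v$ to build a shrunken neighborhood on which the increment stays below $s$ up to time $T$, and then apply the hypothesis inside that shrunken neighborhood to obtain a point with finite transient time. If anything, your bookkeeping is slightly sharper, since controlling times $0$ through $T$ gives $T_s(x)\geq T+1>T$, whereas the paper's proof states only $T_s(x)\geq T$ before concluding.
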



\begin{proof}
Let $s \in (0, S)$ and $T \in \mathbb{Z}^+$. Let $U$ be a neighborhood of $x_*$. We will show that $U$ contains a $(v, s, T)$-transient point.  Since $x_* \in X^v$, we have $\Delta v(f^t(x_*)) = 0$ for all $t \in \mathbb{Z}^+_0$. For each $t\in 0\mathbin{:}T$, from the continuity of $\Delta v$ and $f^t$,  we can choose $r_t>0$ such that for all $x\in B_{r_t}(x_*)$ we have $|\Delta v (f^t(x))| < s$. Here, we also pick $r_0$ small enough so that $B_{r_0}(x_*) \subset U$.
Now set $r=\min_{t\in 0\mathbin{:}T}\{r_t\}$ and $W=B_r(x_*)$.
Clearly $W \subset B_{r_0}(x_*)  \subset U$. By construction, $|\Delta v(f^t(x))| < s$ for all $x \in W$ and $t \in 0\mathbin{:}T$. By assumption, there exists a point $x \in W$ such that $T_s(x) < \infty$. Since $x \in W$, we know that $|\Delta v(f^t(x))| < s$ for all $t \in 0\mathbin{:}T$, ensuring that $T_s(x) \geq T$. Thus, $x$ satisfies the conditions for being a $(v, s, T)$-transient point. It follows that $x_*$ is a $v$-transient center.
\end{proof}

\begin{theorem}\label{thm: finite transient time}
	A point $x_* \in X^v$ is a $v$-transient center if and only if there exists $S_* > 0$ such that in any neighborhood of $x_*$, there is an $x$ such that $T_{S_*}(x) < \infty$.
\end{theorem}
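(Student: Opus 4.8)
The plan is to prove the two implications separately. The forward direction will follow almost immediately from the definition of a transient center, while the backward direction reduces to Theorem \ref{thm: transient center criterion} via a simple monotonicity observation about $T_s$.

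For the forward implication, I would assume $x_*$ is a $v$-transient center and extract the witnessing constant $S > 0$ from Definition \ref{defn: transient center}. Choosing $S_* = S/2 \in (0, S)$ and fixing $T = 1$, the definition guarantees that every neighborhood of $x_*$ contains a $(v, S_*, 1)$-transient point $\xi$, which by definition satisfies $1 < T_{S_*}(\xi) < \infty$; in particular $T_{S_*}(\xi) < \infty$. This exhibits the required point in every neighborhood, so this direction needs nothing beyond unwinding the definitions.

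The key to the backward implication is that $s \mapsto T_s(\xi)$ is non-decreasing: if $0 < s_1 < s_2$, then $\{\, t : |\Delta v(f^t(\xi))| > s_2 \,\} \subseteq \{\, t : |\Delta v(f^t(\xi))| > s_1 \,\}$, so taking infima over the two sets gives $T_{s_1}(\xi) \leq T_{s_2}(\xi)$. Consequently, if $T_{S_*}(x) < \infty$ for some point $x$, then $T_s(x) \leq T_{S_*}(x) < \infty$ for every $s \in (0, S_*)$. With this in hand, I would assume the existence of $S_* > 0$ such that every neighborhood of $x_*$ contains a point $x$ with $T_{S_*}(x) < \infty$, and set $S = S_*$. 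For any $s \in (0, S_*)$ and any neighborhood $U$ of $x_*$, the assumption produces $x \in U$ with $T_{S_*}(x) < \infty$, and the monotonicity above yields $T_s(x) \leq T_{S_*}(x) < \infty$. Thus the hypothesis of Theorem \ref{thm: transient center criterion} is satisfied, and since $x_* \in X^v$ by assumption, that theorem concludes that $x_*$ is a $v$-transient center.

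I do not anticipate a substantial obstacle in this argument. The one point requiring care is recognizing that controlling a \emph{single} threshold $S_*$ is enough, rather than all $s \in (0,S)$ as demanded by Theorem \ref{thm: transient center criterion}; this is exactly what the monotonicity of $s \mapsto T_s(\xi)$ delivers, allowing the single-threshold hypothesis to be upgraded to the all-thresholds hypothesis of the earlier result.
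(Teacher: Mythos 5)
Your proof is correct and follows essentially the same route as the paper's: the forward direction unwinds the definition with $S_*=S/2$, and the backward direction uses the monotonicity $T_s(x)\le T_{S_*}(x)$ for $s\in(0,S_*)$ to invoke Theorem \ref{thm: transient center criterion}. The only difference is cosmetic — you make the monotonicity of $s\mapsto T_s(\xi)$ explicit where the paper simply asserts the inequality.
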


\begin{proof}
Assume that $x_*$ is a $v$-transient center. Then, by definition, there exists some $S > 0$ such that for all $s\in(0,S)$ and all $T \in \mathbb{Z}^+$, any neighborhood $U$ of $x_*$ contains a $(v, s, T)$-transient point. Choosing $S_* = S/2$, we see that there must exist an $x \in U$ which is a $(v, S_*, T)$-transient point for any $T \in \mathbb{Z}^+$ since $x_*$ is a $v$-transient center. This means $T_{S_*}(x) < \infty$, which proves the forward implication. Conversely, suppose that there exists $S_* > 0$ such that for every neighborhood of $x_*$, there is an $x$ with $T_{S_*}(x) < \infty$. For any $s\in(0,S_*)$, clearly $T_s(x) \leq T_{S_*}(x) < \infty$.
From the previous theorem we conclude that $x_*$ is a $v$-transient center. This completes the proof.
\end{proof}

Theorem \ref{thm: finite transient time} reduces the problem of verifying transient centers from a universal condition (e.g., for all $0 < s <S$) to an existential one (e.g., there exists $S_*$). Additionally, we can also restate Theorem \ref{thm: finite transient time} in an equivalent double supremum form since the condition that there exists $S_* > 0$ so that every neighborhood of $x_*$ contains some $x$ with $T_{S_*}(x) < \infty$ is equivalent to the existence of $S_* >0$ such that for every $r>0$,
\begin{align*}
    \sup_{x \in B_r(x_*)}~ \sup_{t \in \mathbb{Z}^+_0} \left| \Delta v(f^t(x)) \right| \geq S_*.
\end{align*}
In this form, no matter how small the ball $B_r(x_*)$, the maximum absolute observable increment $\left| \Delta v(f^t(x)) \right|$ achieved by some forward iterate of some point in that ball always reaches at least $S_*$. On another note, reformulating Theorem \ref{thm: finite transient time} in terms of its negation, we observe that a point $x_* \in X^v$ is not a $v$-transient center precisely when for every possible threshold $S_* >0$, there is some neighborhood $U$ around $x_*$ whose points $x \in U$ satisfy $T_{S_*}(x) = \infty$. In other words, all orbits starting in this neighborhood $U$ never exceed some threshold in the rate of change of the observable $v$. 

\begin{example}
    Consider the following model system given by
    \begin{align}
    \begin{cases}
        x(t+1) &= x(t) [1-hy(t)] \\
        y(t+1) &= y(t) +h [x(t) - 1] 
    \end{cases} \label{model: example1}
    \end{align}
where $h \in (0,1)$. Let $v(x, y) = x$. We show that the origin $(0,0)$ is a $v$-transient center. Let
$
    f(x, y) = \left( x(1-hy), y +h(x-1)\right).
$
Then $\Delta v(x, y) = v(f(x, y)) - v(x, y) = -h x y$. Our strategy is to apply Theorem \ref{thm: finite transient time}. 
It is easy to show that $f^t(0,y)=(0,y-th)$ so it is clear that $(0, 0) \in X^v$. Set $S_* \in (0, h^2)$ and let $U$ be an arbitrary neighborhood of the origin. Let $\xi = (\varepsilon, 0)$ where $\varepsilon \in\left(0, 1 - \frac{S_*}{h^2}\right)$ is small enough so that $\xi \in U$. For simplicity, we denote by $\{(x_t, y_t)\}_{t \in \mathbb{Z}^+_0}$ the trajectory $\{f^t(\xi)\}_{t \in \mathbb{Z}^+_0}$. Clearly, $|\Delta v(x_0, y_0)| =0 < S_*$. In the following, we show that there exists $t_* \in \mathbb{Z}^+$ such that $|\Delta v(x_{t_*}, y_{t_*})| > S_*$. 

Note that as long as $x_t < 1$, we see from model system \eqref{model: example1} that 
\begin{align*}
y_{t+1} = y_t + h (x_t -1) < y_t.
\end{align*}
Thus, if we define $t_* = \min \{t \in \mathbb{Z}^+~:~x_t \geq 1\}$, then we have $y_t < y_{t-1}< \dots < y_1$ for all $t \in 1 \mathbin{:} t_*$. Specifically, we have $y_{t_*} < y_1 = -h(1-\varepsilon) <0$. Moreover, whenever $y_t < y_1 <0$ for any $t \in \mathbb{Z^+}$,
\begin{align*}
    x_{t+1}  = x_t(1-hy_t) > x_t (1-hy_1) = (1+\alpha) x_t
\end{align*}
where $\alpha = -hy_1 = h^2(1-\varepsilon) \in (0,1)$.  
This implies that
\begin{align}
    x_t \geq x_1(1+\alpha)^{t-1}  = \varepsilon (1+\alpha)^{t-1} \label{ineq: lowerbound1}
\end{align}
for any $t \in \mathbb{Z^+}$ with $y_t < y_{t-1} < \dots < y_1$.
Since $\alpha >0$, the right-hand side of \eqref{ineq: lowerbound1} is increasing in $t$. Hence, there is a finite $t \in \mathbb{Z}^+$ with $x_t \geq 1$. Therefore, $t_*$ is well-defined and finite. At time $t_*$, we obtain
\begin{align}
    |\Delta v(x_{t_*}, y_{t_*})| = h x_{t_*} (-y_{t_*}) \geq h^2(1-\varepsilon) > S_*. \label{ineq: lowerineq: bound2}
\end{align}
We see from \eqref{ineq: lowerineq: bound2} that the $(v, S_*)$-transient time of the initial condition $\xi = (\varepsilon, 0)$ satisfies $T_{S_*}(\xi) \leq t_* < \infty$. Therefore, we conclude from Theorem \ref{thm: finite transient time} that the origin is a $v$-transient center of the model system \eqref{model: example1}.

To illustrate these results, we simulate the discrete-time system \eqref{model: example1} subject to $h=0.1$ and initial condition $\xi =(10^{-3}, 0)$. Figure \ref{fig: example1} shows the trajectories $x_t$ and $y_t$ with the dashed line marking the first time $x_t$ crosses 1, and the time‐series of $|\Delta v(x_t, y_t)|$ with the horizontal threshold $S_* >0$, and the $(v, S_*)$-transient time $T_{S_{*}}(\xi) $ highlighted.

\begin{figure}
\centering
\subfloat[]{%
\resizebox*{7cm}{!}{\includegraphics{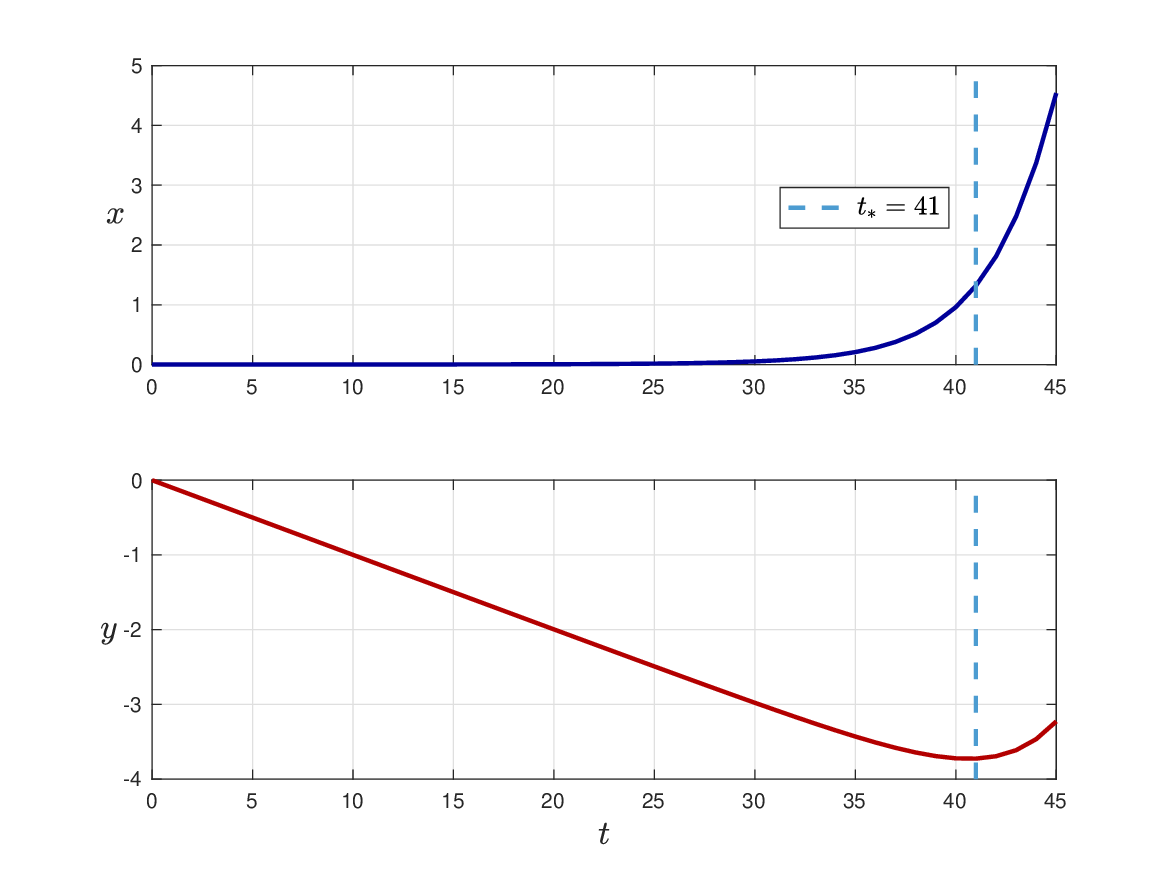}}}\hspace{5pt}
\subfloat[]{%
\resizebox*{7cm}{!}{\includegraphics{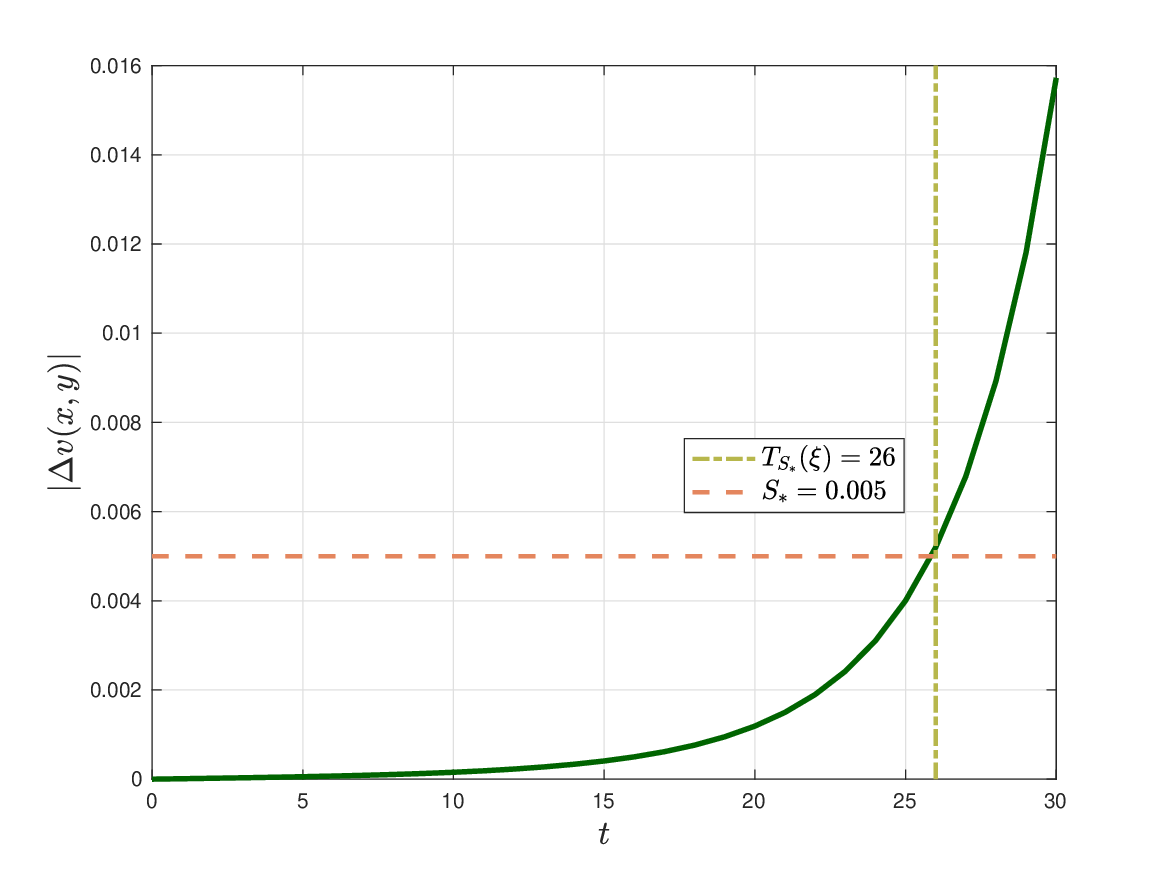}}}
\caption{(a) Trajectories of system \eqref{model: example1} with the dashed vertical line marking $\displaystyle t_*=\min\{t\in \mathbb{Z}^+~:~ x_t \geq 1\}$; and (b) time series of $|\Delta v(x_t, y_t)|$ with the dashed horizontal line at  
  $S_*=\tfrac12\,h^2$ and the dash–dot vertical line at  
  the $(v, S_*)$-transient time $T_{S_*}(\xi)$. } \label{fig: example1}
\end{figure}
\end{example}

\section{Fixed Points as Transient Centers}

In this section, we provide conditions for fixed points to be transient centers. Note that if $x^*$ is a fixed point of model system \eqref{dtades}, then $f(x^*)  = x^*$, and so $\Delta v(x^*) = v(f(x^*)) - v(x^*) =0$ for any $v \in C(\mathbb{R}^n, \mathbb{R})$. Thus, $x^* \in X^v$, and so the fixed points of \eqref{dtades} belong to our candidates for transient centers. Our next result tells us that stable nodes, stable spirals, and centers cannot be transient centers for any choice of the observable $v$.

\begin{theorem}\label{thm: stable}
	A Lyapunov stable fixed point $x^*$ cannot be a $v$-transient center for any observable $v \in C(\mathbb{R}^n, \mathbb{R})$.
\end{theorem}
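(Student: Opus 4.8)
The plan is to apply the negation of Theorem~\ref{thm: finite transient time}, which characterizes exactly when a candidate point fails to be a transient center. First I would note that since $x^*$ is a fixed point, $f(x^*)=x^*$ gives $\Delta v(x^*)=v(f(x^*))-v(x^*)=0$, so $x^*\in X^v$ and the criterion of Theorem~\ref{thm: finite transient time} is applicable. By the contrapositive of that theorem, to show $x^*$ is not a $v$-transient center it suffices to prove that for \emph{every} $S_*>0$ there is a neighborhood $U$ of $x^*$ on which every point $x$ satisfies $T_{S_*}(x)=\infty$, i.e. $|\Delta v(f^t(x))|\le S_*$ for all $t\in\mathbb{Z}^+_0$.

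The core of the argument is a two-neighborhood ($\varepsilon$--$\delta$) construction that combines continuity of the observable increment with Lyapunov stability. Fix an arbitrary $S_*>0$. Since $v$ and $f$ are continuous, $\Delta v$ is continuous, and because $\Delta v(x^*)=0$ there exists $\varepsilon>0$ such that $\|y-x^*\|<\varepsilon$ implies $|\Delta v(y)|=|\Delta v(y)-\Delta v(x^*)|<S_*$. Next I would invoke the Lyapunov stability hypothesis for this particular $\varepsilon$: there exists $\delta>0$ such that $\|x-x^*\|<\delta$ forces $\|f^t(x)-x^*\|<\varepsilon$ for all $t\in\mathbb{Z}^+_0$.

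Setting $U=B_\delta(x^*)$, the two estimates chain together. For any $x\in U$ and any $t$, stability places the orbit point $f^t(x)$ inside $B_\varepsilon(x^*)$, and then the continuity bound yields $|\Delta v(f^t(x))|<S_*$. Hence the defining set of $T_{S_*}(x)$ is empty, so $T_{S_*}(x)=\infty$ for every $x\in U$. Since $S_*>0$ was arbitrary, the negated criterion holds for every threshold, and we conclude that $x^*$ cannot be a $v$-transient center for any $v\in C(\mathbb{R}^n,\mathbb{R})$.

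I do not anticipate a substantive obstacle; the result is a direct synthesis of the definition of stability with the continuity of $\Delta v$ at the fixed point. The only place demanding care is the order of the quantifiers in the $\varepsilon$--$\delta$ step: one must first extract $\varepsilon$ from the continuity of $\Delta v$ at $x^*$ and only then feed that $\varepsilon$ into the stability hypothesis to obtain $\delta$. It is precisely the fact that stability controls the entire forward orbit uniformly in $t$ on the scale $\varepsilon$ that produces a bound $|\Delta v(f^t(x))|<S_*$ holding simultaneously for all $t$, which is what $T_{S_*}(x)=\infty$ requires; reversing the dependency would destroy this uniform-in-time control.
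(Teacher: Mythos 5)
Your proposal is correct and follows essentially the same argument as the paper: combine continuity (to make $|\Delta v|$ small on an $\varepsilon$-ball around $x^*$) with Lyapunov stability (to trap the entire forward orbit in that ball), conclude $T_{S_*}(x)=\infty$ on a $\delta$-neighborhood, and invoke Theorem~\ref{thm: finite transient time}. The only cosmetic difference is that the paper bounds $|\Delta v(f^t(x))|$ via the triangle inequality through $v(x^*)$ using continuity of $v$ alone, whereas you appeal directly to continuity of $\Delta v$ at $x^*$; these are equivalent.
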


\begin{proof}
Fix $s>0$.  By the continuity of $v$, there exists $\varepsilon > 0$ such that $|v(x) - v(x^*)| < \frac{s}{2}$ for all $x \in B_\varepsilon(x^*)$. 
By the Lyapunov stability of $x^*$, we can choose $\delta > 0$ such that if $x\in B_\delta(x^*)$ then $f^t(x)\in B_\varepsilon(x^*)$ for all $t \in \mathbb{Z}^+_0$. Thus, for all $t \in \mathbb{Z}^+_0$ and $x\in B_\delta(x^*)$,
	\begin{align*}
			|\Delta v(f^t(x))| &= |v(f^{t+1}(x)) - v(f^t(x))|  \\ &\leq |v(f^{t+1}(x)) - v(x^*)| + |v(x^*) - v(f^t(x))| < \frac{s}{2} + \frac{s}{2} = s.
	\end{align*}
	This implies $|\Delta v(f^t(x))| < s$ for all $t \in \mathbb{Z}^+_0$, and so $ T_s(x) = \infty$. This is true for all $s>0$. From Theorem \ref{thm: finite transient time}, $x^*$ is not a $v$-transient center.
\end{proof}

We now consider conditions under which an unstable fixed point is a $v$–transient center. In the continuous‐time setting, Liu et al. \cite{liu2023framework} make essential use of the natural invertibility of the flow to control both forward and backward trajectories. By contrast, most ecological and biological processes modeled using discrete-time maps are not naturally invertible. Hence, the results we establish here shall only rely on sufficient smoothness conditions on the map $f$ or the observable $v$ with no assumption of invertibility. Let us first consider the linear system given by 
\begin{align}
    x(t+1) = A x(t), \quad A\in\mathbb{R}^{n\times n}. \label{dtades: linear}
\end{align}
The theorems below present simple and easily verifiable sufficient criteria under which the fixed point of \eqref{dtades: linear} at the origin is a $v$-transient center. In fact, the next result is a discrete-time analogue of a continuous-time result for linear systems established in \cite{liu2023framework}.

\begin{theorem}
    Let $E^u$ be the unstable eigenspace\footnote{The unstable eigenspace $E^u$ of a matrix $A$ is defined as the span of all generalized eigenvectors correspoding to eigenvalues $\lambda$ with $|\lambda| >1$.} of $A \in \mathbb{R}^{n \times n}$. For any $v \in C(\mathbb{R}^n, \mathbb{R})$, the origin is $v$-transient center of the linear system \eqref{dtades: linear} provided that there exists $w \in E^u$ such that $\Delta v(w) \neq 0$.
\end{theorem}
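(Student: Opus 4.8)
The plan is to verify the hypotheses of Theorem \ref{thm: finite transient time} for $x_* = 0$. Since the origin is a fixed point of $f(x) = Ax$, we have $f^t(0) = 0$ and hence $\Delta v(f^t(0)) = v(0) - v(0) = 0$ for every $t$, so $0 \in X^v$. It then remains to exhibit a single threshold $S_* > 0$ such that every neighborhood of the origin contains a point $x$ with $T_{S_*}(x) < \infty$. Writing $c = |\Delta v(w)| > 0$ (note $w \neq 0$, so in particular $E^u \neq \{0\}$ and the origin is unstable, consistent with Theorem \ref{thm: stable}), I would set $S_* = c/2$ and seek points close to the origin whose forward orbit passes exactly through $w$, where $\Delta v$ is already known to be large.

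The key construction is to run the dynamics backward along the unstable subspace. Although $f$ need not be invertible on all of $\mathbb{R}^n$, the restriction $A|_{E^u}$ is invertible, because every eigenvalue of $A|_{E^u}$ has modulus strictly greater than $1$ (in particular none is zero) and $E^u$ is $A$-invariant. I would define $\xi_k = (A|_{E^u})^{-k} w \in E^u$ for $k \in \mathbb{Z}^+$. The spectral radius of $(A|_{E^u})^{-1}$ equals $1/\min\{|\lambda| : \lambda \text{ an eigenvalue of } A|_{E^u}\} < 1$, so $(A|_{E^u})^{-k} \to 0$ in operator norm and therefore $\xi_k \to 0$ as $k \to \infty$.

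By construction, the forward orbit of $\xi_k$ returns to $w$ after exactly $k$ steps: using invariance of $E^u$, $f^k(\xi_k) = A^k \xi_k = (A|_{E^u})^{k}(A|_{E^u})^{-k} w = w$. Consequently $|\Delta v(f^k(\xi_k))| = |\Delta v(w)| = c > S_*$, which forces $T_{S_*}(\xi_k) \le k < \infty$. Given any neighborhood $U$ of the origin, choosing $k$ large enough that $\xi_k \in U$ (possible since $\xi_k \to 0$) produces a point of $U$ with finite $S_*$-transient time. This establishes the hypothesis of Theorem \ref{thm: finite transient time}, from which the conclusion that the origin is a $v$-transient center follows immediately.

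The main obstacle is precisely the lack of global invertibility of $f$, which is the difficulty flagged when departing from the continuous-time flow. The resolution, and the crucial step, is to observe that invertibility is needed only along the invariant unstable subspace $E^u$, where $A$ is automatically invertible and its inverse is a contraction; this is what lets us manufacture initial conditions arbitrarily close to the origin whose orbits are guaranteed to strike the witness point $w$. A minor point to confirm carefully is that $\xi_k \in E^u$ for all $k$ (immediate from invariance) and that continuity of $v$ enters only through Theorem \ref{thm: finite transient time}, since our estimate evaluates $\Delta v$ at the exact point $w$ rather than on a neighborhood.
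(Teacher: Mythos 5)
Your proposal is correct and follows essentially the same argument as the paper: both restrict $A$ to the invariant unstable subspace $E^u$, use the invertibility of that restriction (whose inverse is a contraction) to pull the witness point $w$ back to points $A_u^{-k}w$ arbitrarily close to the origin, set $S_*=|\Delta v(w)|/2$, and invoke Theorem \ref{thm: finite transient time}. Your explicit verification that $0\in X^v$ and the spectral-radius justification that $\xi_k\to 0$ are minor elaborations of steps the paper leaves implicit.
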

\begin{proof}
    Since $E^u$ is an invariant subspace of $A$, the restriction of $A$ to $E^u$, denoted by $A_u$, is a linear map from $E^u$ to $E^u$. The map $A_u$ is also invertible since its eigenvalues are exactly those eigenvalues $\lambda$ of $A$ with $|\lambda| >1$. We also note that the eigenvalues of $A_u^{-1}$ are the reciprocals of the eigenvalues of $A^{u}$, that is, $\lambda^{-1}$ is an eigenvalue of $A_u^{-1}$ if $\lambda$ is an eigenvalue of $A_u$.

    Let $w \in E^u$ such that $\Delta v(w) \neq 0$. Note that $w \neq 0$. Otherwise, if $w=0$, then we get $\Delta v(w) = v(Aw) - v(w) = v(0)-v(0) = 0$ contradicting the assumption that $\Delta v(w) \neq 0$. Let $U$ be an arbitrary neighborhood of the origin. Since $\|A_u^{-t} w\| \to 0$ as $t \to\infty$, there exists a sufficiently large $t_* \in \mathbb{Z}^+$ such that $\xi = A_u^{-t_*} w \in U$. If we set $S_* = \frac{|\Delta v(w)|}{2}$, then the point $\xi$ has the property $T_{S_*} (\xi) \leq t^* < \infty$. By Theorem \ref{thm: finite transient time}, the origin is a $v$-transient center.
\end{proof}

We now replace the pointwise condition $\Delta v(w) \neq 0$ for some $w \in E^u$ with a local condition at the origin: $\nabla v(0) \cdot w \neq 0$, where $w$ is an eigenvector of $A$ corresponding to a real eigenvalue $\lambda$ with $|\lambda|>1$. This local condition ensures that for small initial conditions in the direction of $w$, the absolute observable increment $|\Delta v(x)|$ becomes sufficiently large along the trajectory of the linear system. Additionally, we emphasize that all remaining results and proofs in this section are entirely new contributions that do not presently have continuous-time counterparts.

\begin{theorem}\label{thm: linear_nonorthogonality}
    Let $f (x) = Ax$ where $A\in\mathbb{R}^{n\times n}$ and $v \in C^1(\mathbb{R}^n, \mathbb{R})$. If $A$ has a real eigenvalue $\lambda$ satisfying $|\lambda| > 1$ with corresponding eigenvector $w \in \mathbb{R}^n$ such that $\nabla v(0) \cdot w \neq 0$, then the origin is a $v$-transient center.
\end{theorem}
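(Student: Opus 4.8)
The plan is to reduce the claim to an application of Theorem \ref{thm: finite transient time}. Since the origin is a fixed point of the linear map, we already know $0 \in X^v$, so it suffices to exhibit a threshold $S_* > 0$ such that every neighborhood of the origin contains a point $x$ with $T_{S_*}(x) < \infty$; equivalently, a point whose forward orbit produces an observable increment $|\Delta v(f^t(x))|$ exceeding $S_*$ at some finite time. The natural candidate is a point on the one-dimensional eigenline spanned by $w$: because $Aw = \lambda w$ with $|\lambda| > 1$, iterates of such a point stay on this line and grow geometrically, so small initial conditions get pushed away from the origin in a controlled direction.

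First I would take the initial condition $x_0 = \eta w$ for a small scalar $\eta \neq 0$, chosen so that $x_0$ lands in the given neighborhood $U$; this is possible since $\|\eta w\| \to 0$ as $\eta \to 0$. The orbit is then $f^t(x_0) = \lambda^t \eta w$, which stays on the eigenline. The key computation is to estimate $\Delta v(f^t(x_0)) = v(\lambda^{t+1}\eta w) - v(\lambda^t \eta w)$. Here is where the $C^1$ hypothesis enters: by the mean value theorem (or a first-order Taylor expansion of $v$ along the segment between the two consecutive iterates), this difference equals $\nabla v(\zeta_t) \cdot (\lambda^{t+1}\eta w - \lambda^t \eta w) = (\lambda - 1)\lambda^t \eta \,\bigl(\nabla v(\zeta_t) \cdot w\bigr)$ for some intermediate point $\zeta_t$ on that segment. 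Since $\nabla v(0)\cdot w \neq 0$ and $\nabla v$ is continuous, by restricting $\eta$ to be small enough the factor $\nabla v(\zeta_t)\cdot w$ can be kept close to $\nabla v(0)\cdot w$ for the early iterates, and in particular bounded away from zero.

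The main obstacle, and the step requiring care, is controlling the intermediate point $\zeta_t$: as $t$ grows the iterate $\lambda^t \eta w$ leaves any fixed neighborhood of the origin, so the approximation $\nabla v(\zeta_t)\cdot w \approx \nabla v(0)\cdot w$ is only reliable for bounded $t$. The clean way to handle this is to note that we do not need the estimate to persist for all $t$ — we only need one finite time at which the increment is large. I would argue as follows: fix a small radius $\rho$ on which $|\nabla v(x)\cdot w - \nabla v(0)\cdot w| < \tfrac{1}{2}|\nabla v(0)\cdot w|$ for $\|x\| < \rho$, so that on this ball $|\nabla v(x)\cdot w| \geq \tfrac12 |\nabla v(0)\cdot w| =: c > 0$. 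Because $|\lambda|>1$, for any small starting $\eta$ there is a first time $t_0$ at which the iterate $\lambda^{t_0}\eta w$ exits $B_\rho(0)$; just before exiting, the factor $|(\lambda-1)\lambda^{t_0}\eta|\,\|w\|$ is comparable to $\rho$ (of order $(|\lambda|-1)$ times the exit scale), while $\zeta_{t_0}$ still lies in the ball and hence $|\nabla v(\zeta_{t_0})\cdot w|\geq c$. This yields a lower bound on $|\Delta v(f^{t_0}(x_0))|$ that is independent of $\eta$, giving a uniform threshold $S_*$.

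With such an $S_*>0$ fixed, every neighborhood $U$ of the origin contains a point (namely $\eta w$ for suitably small $\eta$) whose transient time satisfies $T_{S_*}(\eta w)\le t_0<\infty$. Since $0\in X^v$, Theorem \ref{thm: finite transient time} then immediately gives that the origin is a $v$-transient center, completing the proof. The only genuinely delicate point is making the lower bound on the observable increment uniform in the initial scale $\eta$, and the exit-time device above is the mechanism I would use to secure that uniformity.
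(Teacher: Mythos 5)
Your proposal is correct and follows essentially the same route as the paper's proof: initialize at $\varepsilon w$ on the unstable eigenline, apply the mean value theorem to write $\Delta v(f^t(x)) = (\lambda-1)\lambda^t\varepsilon\,\nabla v(\zeta_t)\cdot w$, keep $|\nabla v(\zeta_t)\cdot w|$ bounded below on a small ball by continuity, and evaluate at the last iterate before the orbit exits that ball to get an $\eta$-independent threshold $S_*$ for Theorem \ref{thm: finite transient time}. The only nit is an off-by-one: you should evaluate at the step \emph{before} the first exit time so that the intermediate point $\zeta_t$ is still inside the ball, which is exactly how the paper sets $t_*=t_r-1$.
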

\begin{proof}
The proof involves finding an $S_*$ with the properties given in Theorem \ref{thm: finite transient time}. Let us denote by $I$ the $n\times n$ identity matrix. We assume without loss of generality that $\lambda >1$ and $\|w\|=1$. Let $U$ be an arbitrary neighborhood of the origin. For each $t\in\mathbb{Z}_0^+$, we have $\Delta v(f^t(x)) = v(A^{t+1} x) - v(A^t x)$. Since $v \in C^1(\mathbb{R}^n, \mathbb{R})$, by the Mean Value Theorem,
    \begin{align*}
        v(A^{t+1} x) - v(A^t x) = \nabla v(\zeta_t) \cdot (A^{t+1} x - A^t x)
    \end{align*}
    for some $\zeta_t \in \mathbb{R}^n$ lying on the line segment joining $A^t x$ to $A^{t+1} x$. Thus,
    \begin{align}
        |\Delta v(f^t (x))| = |\nabla v (\zeta_t) \cdot A^t(A- I)x| . \label{eqn_mvt}
    \end{align}
    Let $\kappa = |\nabla v(0) \cdot w | > 0$. By using the continuity of $\nabla v$, we can find an $r >0$ such that if $y \in B_r(0)$, then
    \begin{align}
        |\nabla v(y) \cdot w - \nabla v(0) \cdot w| < \frac{\kappa}{2}. \label{eqn_continuity}
    \end{align}
Set $x = \varepsilon w$ where $\varepsilon \in(0,r)$ is sufficiently small so that $x \in U$. 
We construct $S_* >0$ such that $T_{S_*} (x) < \infty$. 
Let $t_r = \max\{t\in \mathbb{Z}^+_0~:~\|A^{t} x\|  <  r\}$. Note that $t_r < \infty$ since $A^{t} x=\varepsilon A^{t} w=\varepsilon \lambda^t w$ and $\lambda>1$. We can make $t_r$ as large as we want by making $\varepsilon >0$ sufficiently small. Hence, the point $\zeta_t$ from \eqref{eqn_mvt} lying between the line segment joining $A^t x$ and $A^{t+1}x$ is in $B_r(0)$ for all $t < t_r$. From \eqref{eqn_continuity}, we have 
    \begin{align}
    |\nabla v(\zeta_t)\cdot w - \nabla v(0) \cdot w| < \frac{\kappa}{2} \quad \forall~t \in 0\mathbin{:} t_r-1. \label{eqn_cont2}
    \end{align}
    Hence, we have 
    \begin{align}
        |\nabla v(\zeta_t)\cdot w| \geq |\nabla v(0)\cdot w| - \frac{\kappa}{2} = \frac{\kappa}{2}\quad \forall~t \in 0\mathbin{:} t_r-1.
    \end{align}
    Using equation \eqref{eqn_mvt}, we get for all $t < t_r$ that 
    \begin{align}
        |\Delta v(f^t (x))| = \varepsilon (\lambda-1) \lambda^{t} ~|\nabla v(\zeta_t) \cdot w|  \geq \frac{\varepsilon \kappa}{2} (\lambda -1) \lambda^t. \label{ineq_last}
    \end{align}
    Let $S_* = \frac{r\kappa}{4 \lambda^2}(\lambda -1) >0$ and $t_* = t_r -1 < \infty$. Note that from the definition of the time $t_r$, we have $ \varepsilon \lambda^{t_r + 1} = \|A^{t_r+1} x\|  \geq r$ which implies that $\lambda^{t_*} \geq \frac{r}{\varepsilon \lambda^2}$. Applying \eqref{ineq_last}, we see that
    \begin{align*}
        |\Delta v(f^{t_*} (x))| \geq \frac{\varepsilon \kappa}{2} (\lambda -1) \lambda^{t_*} \geq  2S_* > S_*.
    \end{align*}
    This implies that $T_{S_*} (x) \leq t_* < \infty$ completing the proof. 
\end{proof}

Note that the linear map expands vectors along its unstable directions. If we have two very close level sets of the observable function $v$, say $\{v =c\}$ and $\{v = c + S\}$ where $c \in \mathbb{R}$ and $S>0$ is small, then the condition $\nabla v(0) \cdot w \neq 0$ implies that the line $\{\varepsilon w~:~\varepsilon >0\}$ intersects these two level sets at a nonzero angle. Starting at an initial condition $\varepsilon w$ sufficiently close to the origin, each iteration of the linear system \eqref{dtades: linear} moves the orbit forward along the $w$-direction by a factor of $\lambda >1$. Since $\varepsilon >0$ is small and the gradient $\nabla v$ is nearly constant in a neighborhood of the origin, the increment $|\Delta v|$ remains very small for many steps and so the orbit stays trapped between the two level sets. However, the exponential factor $\lambda^t$ makes the orbit leave this tube of level sets after enough iterations causing $|\Delta v|$ to eventually exceed $S$. We prove an extension of this result for nonlinear systems in the following theorem.

\begin{theorem}\label{thm: nonlinear_nonorthogonality}
Let $x^*$ be a fixed point of \eqref{dtades} where $f\in C^2(\mathbb{R}^n, \mathbb{R}^n)$, and let $v \in C^1(\mathbb{R}^n, \mathbb{R})$. Suppose that $A = Df(x^*)$ has an eigenvalue $\lambda \in \mathbb{R}$ satisfying $|\lambda| >1$ and either
\begin{enumerate}
    \item [(H1)] $|\lambda|^2 > \|A\|$ where $\|A\|$ denotes the spectral norm of $A$, or
    \item [(H2)] $|\lambda|^2 > \rho(A)$ where $\rho(A)$ is the spectral radius of $A$.
\end{enumerate}
If there exists an eigenvector $w \in \mathbb{R}^n$ corresponding to $\lambda$ such that $\nabla v(x^*) \cdot w \neq 0$, then $x^*$ is a $v$-transient center.
\end{theorem}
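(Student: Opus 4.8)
The plan is to verify the criterion of Theorem~\ref{thm: finite transient time}: I will produce a single threshold $S_* > 0$ and show that every neighborhood of $x^*$ contains a point $x$ with $T_{S_*}(x) < \infty$. After translating so that $x^* = 0$ (replacing $f$ by $y \mapsto f(y+x^*) - x^*$ and $v$ by $y \mapsto v(y+x^*)$, which leaves $A = Df(0)$, $\nabla v(0)=\nabla v(x^*)$ and every increment $\Delta v$ unchanged), I normalize $\|w\|=1$ and write $\ell = |\lambda| > 1$, keeping magnitudes throughout so that the case $\lambda < -1$ is handled simultaneously. The guiding idea is that the orbit launched from $\varepsilon w$ should shadow the pure eigendirection orbit $\varepsilon\lambda^t w$ while it remains near $0$, so that the linear argument of Theorem~\ref{thm: linear_nonorthogonality} can be run up to a controlled error.

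The first and central step is a shadowing estimate. Since $f\in C^2$ with $f(0)=0$ and $Df(0)=A$, Taylor's theorem gives $f(x)=Ax+R(x)$ with $\|R(x)\|\le C\|x\|^2$ on a fixed reference ball $B_{\delta_0}(0)$. Writing $x_t = f^t(\varepsilon w)$ and $e_t = x_t - \varepsilon\lambda^t w$, the identity $Aw=\lambda w$ yields the linear recursion $e_{t+1}=Ae_t+R(x_t)$ with $e_0=0$, hence $e_t=\sum_{k=0}^{t-1}A^{t-1-k}R(x_k)$. I then run a bootstrap induction establishing $\|e_t\|\le K\varepsilon^2\ell^{2t}$, and consequently $\|x_t\|\le 2\varepsilon\ell^t$, as long as the orbit stays in $B_{\delta_0}$. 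This is exactly where (H1) and (H2) enter: under (H1) I use submultiplicativity $\|A^{j}\|\le\|A\|^{j}$ with $\|A\|<\ell^2$, while under (H2) I fix $\mu\in(\rho(A),\ell^2)$ and invoke Gelfand's formula to get $\|A^{j}\|\le M\mu^{j}$ with $\mu<\ell^2$. In both cases the geometric sum $\sum_k \|A\|^{t-1-k}\ell^{2k}$ (respectively with $\mu$) is dominated by its top term because the ratio $\ell^2/\|A\|$ (respectively $\ell^2/\mu$) exceeds $1$, giving $\|e_t\|=O(\varepsilon^2\ell^{2t})$. Since $\rho(A)\le\|A\|$, hypothesis (H1) in fact implies (H2), so the Gelfand route subsumes both; I would keep (H1) separate only because it avoids passing to an equivalent norm.

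With shadowing in hand, the second step mirrors the linear proof. I expand $x_{t+1}-x_t=(A-I)x_t+R(x_t)=\varepsilon\lambda^t(\lambda-1)w+g_t$ with $\|g_t\|\le K'\varepsilon^2\ell^{2t}$, and apply the Mean Value Theorem to write $\Delta v(x_t)=\nabla v(\zeta_t)\cdot(x_{t+1}-x_t)$ for some $\zeta_t$ on the segment joining $x_t$ to $x_{t+1}$. Setting $\kappa=|\nabla v(0)\cdot w|>0$ and using continuity of $\nabla v$, I keep $\zeta_t$ in a ball small enough that $|\nabla v(\zeta_t)\cdot w|\ge\kappa/2$ and $\|\nabla v(\zeta_t)\|\le B$, obtaining $|\Delta v(x_t)|\ge \varepsilon\ell^t|\lambda-1|\tfrac{\kappa}{2}-BK'\varepsilon^2\ell^{2t}$. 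I then fix a small $\delta\le\delta_0$, set $S_*$ proportional to $\delta|\lambda-1|\kappa$, and for each neighborhood $U$ pick $\varepsilon>0$ small with $\varepsilon w\in U$ and a finite time $t_*$ at which $\varepsilon\ell^{t_*}$ is comparable to $\delta$ (so the orbit is still inside the good ball). At that time the linear term is of order $\delta$ and the error of order $\delta^2$, so for $\delta$ small the linear term wins and $|\Delta v(x_{t_*})|>S_*$, giving $T_{S_*}(\varepsilon w)\le t_*<\infty$; Theorem~\ref{thm: finite transient time} then closes the argument.

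I expect the main obstacle to be the shadowing estimate, specifically making the bootstrap rigorous and pinpointing how (H1)/(H2) prevent the quadratic error injected at each step from being amplified faster than $\ell^{2t}$ by the iterates of $A$. The inequality $\ell^2>\|A\|$ (respectively $\ell^2>\rho(A)$) is precisely the balance that forces $\|e_t\|/\|x_t\|=O(\varepsilon\ell^t)$; without it, error contributions along non-eigendirections could grow like $\|A\|^t$ and destroy the size and sign control of $\nabla v(\zeta_t)\cdot(x_{t+1}-x_t)$. A secondary technical point is to fix the constants $C,K,K'$ on the reference ball $B_{\delta_0}$ \emph{before} shrinking $\delta$, so that $S_*$ is genuinely independent of $U$ and the choice of $\delta$ introduces no circularity.
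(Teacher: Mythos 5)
Your proposal is correct and follows essentially the same route as the paper: write $f(x)=Ax+g(x)$ with a quadratic bound on $g$, show by a bootstrap induction that the orbit from $\varepsilon w$ shadows $\varepsilon\lambda^t w$ with error $O(\varepsilon^2|\lambda|^{2t})$ (the geometric sum converging precisely because $|\lambda|^2$ exceeds $\|A\|$, resp.\ a bound $\|A^j\|\le M\mu^j$ with $\mu<|\lambda|^2$), then use the Mean Value Theorem and $\nabla v(x^*)\cdot w\neq 0$ to make $|\Delta v|$ exceed a threshold $S_*$ tied to a fixed radius at the exit time. The only cosmetic differences are that you apply the MVT to the true increment $x_{t+1}-x_t$ rather than splitting $\Delta v$ into a linear-orbit part $L_t$ and correction $R_t$, and you invoke Gelfand's formula for (H2) where the paper passes to an equivalent norm; both choices are sound.
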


\begin{proof}
We assume without loss of generality that $x^* =0$ and rewrite \eqref{dtades} as,
\begin{align}
    x(t+1) = A x(t) + g(x(t)), \label{dtades: 2}
\end{align}
where $A = Df(0)$, $g(x) = f(x)-Ax \in C^2(\mathbb{R}^n, \mathbb{R}^n)$, $g(0) =0$, and $Dg(0) =0$. Denote the unique solution $x(t)$ starting at an initial point $\xi \in \mathbb{R}^n$ of \eqref{dtades: 2} by $x_t$ for $t \in \mathbb{Z}^+_0$. From the variation of constants formula (see for instance Theorem 3.17 of \cite{elaydi2005introduction} or Lemma A.1.1 of \cite{cushing1998}), we have $x_0 = \xi$ and
\begin{align}
    x_t = A^t x_0 + \sum_{k=0}^{t-1} A^{t-k-1} g(x_k), \quad t\in\mathbb{Z}^+.
\end{align}

Let $a = \|A\|$, $\kappa = |\nabla v(0) \cdot w|>0$, and assume without loss of generality that $\lambda>1$ and $\|w\|=1$. We consider first the case where (H1) holds so that we have $\lambda^2 > a$. Because $g \in C^2(\mathbb{R}^n, \mathbb{R}^n)$, $g(0)=0$ and $Dg(0) =0$, and $\nabla v$ is continuous with $\nabla v(0) \neq 0$, there are constants $r_0>0$ and $\alpha>0$ such that for all $x\in B_0 := \{x \in \mathbb{R}^n~:~ \|x\| \leq r_0\}$, we have
\begin{align}
   \|g(x)  \| \leq \alpha \|x\|^2  \quad \text{and} \quad |\nabla v(x) \cdot w| \geq \frac{\kappa}{2}.\label{ineq: quadratic}
\end{align}
We also set $\eta:=\sup_{ x\in B_0} \|\nabla v(x)\| < \infty$.
We choose
\begin{align}
    0 < r < \min\left\{r_0,~ \frac{\lambda^2-a}{2\alpha}, ~\frac{\kappa (\lambda-1)(\lambda^2-a)}{8\eta \alpha(\lambda^2 +1)}\right\} \label{ineq: r}.
\end{align}
Let $U$ be an arbitrary neighborhood of the origin and set $\xi = \varepsilon w$ where $\varepsilon \in \Big(0, \frac{r}{2\lambda}\Big)$ is small enough so that $\xi \in U$. Define  
\begin{align}
    t_r = \max \left\{t \in \mathbb{Z}^+_0~:~ \|A^t \xi\| =\varepsilon \lambda^t < \frac{r}{2}\right\} \label{eq: tr}.
\end{align}
Note that $t_r < \infty$ and we can make $t_r$ as large as we want by making $\varepsilon$ sufficiently small. In the following, we construct $S_* >0$ such that $T_{S_*} (\xi) < \infty$. 

Let $e_0 =0$ and $e_t =\sum_{k=0}^{t-1} A^{t-k-1} g(x_k)$ for $t \in \mathbb{Z}^+$. Then $x_t = A^t\xi  + e_t$ for $t \in \mathbb{Z}^+$. We show first that the trajectory satisfies $\|x_t\| \leq 2 \varepsilon \lambda^t < r$ for all $t\in 0: t_r$ by induction. Clearly, we have $\|x_0\| = \varepsilon \|w\|  < 2\varepsilon < \frac{r}{\lambda} < r$. Now, if $\|x_k\| \leq 2 \varepsilon \lambda^k < r$ for all $k \in 0:t-1$, then we obtain from \eqref{ineq: quadratic} that
\begin{multline}
    \|e_t\| \leq \sum_{k=0}^{t-1} a^{t-k-1} \|g(x_k)\| 
    \leq 
    \sum_{k=0}^{t-1} a^{t-k-1} \alpha (2\varepsilon\lambda^k)^2
    \\
    =  4 \alpha \varepsilon^2 \lambda^{2t-2}
    \sum_{k=0}^{t-1} \Big(\frac{a }{\lambda^2}\Big)^{t-k-1}
    \leq 4 \alpha \varepsilon^2 \lambda^{2t-2}\sum_{k=0}^{t-1} \left(\frac{a}{\lambda^2}\right)^k \leq \frac{4\alpha \varepsilon^2}{\lambda^2-a} \lambda^{2t}. \label{ineq: et}
\end{multline}
The last line is because $\lambda^2>a$. Also, we have $\varepsilon \lambda^{t} \leq \varepsilon \lambda^{t_r} <\frac{r}{2}$ for $t \in 0: t_r$, and so $ \frac{4\alpha \varepsilon \lambda^{t}}{\lambda^2-a} \leq \frac{2\alpha  r}{\lambda^2-a} < 1$ by the choice of $r>0$ from \eqref{ineq: r}. Therefore,
\begin{align*}
    \|x_t\| \leq \|A^t\xi\| + \|e_t\| \leq \varepsilon \lambda^t \left(1 + \frac{4\alpha \varepsilon \lambda^{t}}{\lambda^2-a}\right) \leq 2 \varepsilon \lambda^t \leq 2 \varepsilon \lambda^{t_r} < r, \quad \forall~ t\in 0:t_r.
\end{align*}

We are now ready to construct $S_* >0$ so that the $(v, S_*)$-transient time of $\xi$ is finite. Let
\begin{align*}
    L_t = v(A^{t+1} \xi) - v(A^t \xi) \quad \text{and} \quad R_t = [v(x_{t+1}) - v(A^{t+1} \xi)] - [v(x_t) - v(A^t \xi)].
\end{align*} 
We can then express 
\begin{align*}
    \Delta v(x_t) = v(x_{t+1}) - v(x_t) = L_t + R_t.
\end{align*}
By the mean value theorem, there exist points $\psi_t$ on the segment $ v(A^{t+1} \xi)$ to $v(x_{t+1})$ and $\varphi_t$ on the segment $ v(A^{t} \xi)$ to $v(x_{t})$ such that 
\begin{align*}
    v(x_{t+1}) - v(A^{t+1} \xi) = \nabla v(\psi_t) \cdot e_{t+1} \quad \text{and} \quad  v(x_{t}) - v(A^{t} \xi) = \nabla v(\varphi_t) \cdot e_t.
\end{align*}
We thus obtain for all $t < t_r$ that 
\begin{align*}
    |R_t| &\leq \|\nabla v(\psi_t)\| \|e_{t+1}\| + \|\nabla v(\varphi_t)\| \|e_t\|   \leq \frac{4\eta \alpha \varepsilon^2 (\lambda^2 + 1)}{\lambda^2-a} \lambda^{2t} 
\end{align*}
where we used \eqref{ineq: et} in the last inequality. Moreover, similar to what we have done in \eqref{ineq_last} in Theorem \ref{thm: linear_nonorthogonality}, we have 
\begin{align*}
    |L_t| \geq \frac{\varepsilon \kappa}{2} (\lambda -1) \lambda^t~\text{for all}~t < t_r.
\end{align*}
Hence, using the definition of $t_r$ from \eqref{eq: tr}, we apply these inequalities at time $t_* = t_r-1$ to obtain
\begin{align*}
    |L_{t_*}| \geq \frac{\kappa (\lambda-1)}{4\lambda^2} r \quad \text{and} \quad |R_{t_*}| \leq \frac{\eta \alpha (\lambda^2 +1)}{\lambda^2 (\lambda^2 -a)} r^2.
\end{align*}
Set $S_* = \frac{r\kappa(\lambda-1)}{8\lambda^2} >0$. By the choice of $r>0$ from \eqref{ineq: r}, we get

\begin{align*}
    |\Delta v(x_{t_*})|  &\geq |L_{t_*}| - |R_{t_*}| \\ &= \frac{r}{4\lambda^2} \left(\kappa(\lambda-1) - \frac{4\eta \alpha(\lambda^2+1)}{\lambda^2 -a}r\right)  > \frac{r}{4\lambda^2} \left(\frac{\kappa(\lambda-1)}{2}\right) = S_*.
\end{align*}
We conclude that $T_{S_*}(\xi) \leq t_* < \infty$. Therefore, the fixed point $x^*$ is a $v$-transient center of the nonlinear system \eqref{dtades} from Theorem \ref{thm: finite transient time}. 

The proof for the case in which (H2) holds follows a similar line of argument as the proof for the (H1) case. If $\lambda \in \mathbb{R}$ is an eigenvalue of $A$ such that $\lambda >1$ and $\lambda^2 > \rho(A)$, then there is a $\beta >0$ satisfying $\lambda^2 > \rho(A) +\beta$. Since $\beta>0$, there exists a matrix norm $\|\cdot\|_*$ (which is induced by a vector norm on $\mathbb{C}^n$) such that $\|A\|_* \leq \rho(A) + \beta$ (see for instance Lemma 5.6.10 of \cite{horn2012matrix} or Theorem 4.4 of \cite{householder1958approximate}). Hence, we have $|\lambda|^2 > \|A\|_*$ which is similar to condition (H1). We now see that the rest of the proof proceeds similarly to the (H1) case. Using the vector norm that induces the matrix norm $\|\cdot\|_*$, we can replicate the proof for the (H1) case step by step and obtain analogous estimates. Recall that all norms in a finite dimensional space are equivalent.
\end{proof}

The conditions $|\lambda|^2 > \|A\|$ or $|\lambda|^2 > \rho(A)$ are both practical to verify and geometrically meaningful. Computationally, the spectral norm $\|A\| = \sqrt{\rho(A^\top A)}$ and spectral radius $\rho(A)$ can be estimated using standard numerical methods. Geometrically, these conditions ensure that the growth along the unstable direction $w\in \mathbb{R}^n$ sufficiently dominates the system's overall expansion to overcome nonlinear perturbations. It is also worth mentioning several useful special cases and possible extensions of Theorem \ref{thm: nonlinear_nonorthogonality}. First, we note that if $A = Df(x^*)$ and $Aw = \lambda w$ for some $\lambda \in \mathbb{R}$ and $w \in \mathbb{R}^n$, then the condition $\nabla v(x^*) \cdot w \neq 0$ is equivalent to $\nabla \Delta v(x^*) \cdot w \neq 0$. Also, the condition $|\lambda|^2 > \rho(A)$ for some eigenvalue $\lambda \in \mathbb{R}$ is automatically satisfied when $|\lambda| = \rho(A) >1$. Regarding the smoothness condition, we are currently investigating whether the hypothesis $f \in C^2(\mathbb{R}^n, \mathbb{R}^n)$ can be relaxed to $f \in C^1(\mathbb{R}^n, \mathbb{R}^n)$. Note that the current proof relies on a quadratic bound for the nonlinear term $g(x) = f(x) - Df(x^*)x$, which is guaranteed by $C^2$ smoothness. Under $C^1$ smoothness, we only have $g(x) = o(\|x\|)$, which may not suffice to control the error accumulation over the required time interval. Thus, the question remains open. We are also analyzing the case where we have $|\lambda|^2 > \|A\|$ or $|\lambda|^2 > \rho(A)$ for some complex eigenvalue $\lambda$, using a dilation-rotation reduction on the two-dimensional real invariant subspace. Lastly, we have the following corollary which is a special case of Theorem \ref{thm: nonlinear_nonorthogonality}.

\begin{corollary} \label{Corr: Perron-Frobenius}
    Let $A = Df(x^*) \in \mathbb{R}^{n\times n}$ where $n \geq 2$ and $x^* \in \mathbb{R}^n$ is a fixed point of \eqref{dtades} with $f \in C^2(\mathbb{R}^n, \mathbb{R}^n)$. Set $v(x) = p^\top x$ where $p$ is a nonzero nonnegative vector in $\mathbb{R}^n$. If $A$ is nonnegative, irreducible\footnote{A nonnegative matrix $A$ (i.e., all of its entries are nonnegative) is irreducible if and only if $(I + A)^{n-1}$ is a positive matrix (i.e., all of its entries are positive).} and has an eigenvalue $\lambda$ satisfying $|\lambda| > 1$, then $x^*$ is a $v$-transient center.
\end{corollary}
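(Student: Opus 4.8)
The plan is to reduce the corollary to a direct application of Theorem \ref{thm: nonlinear_nonorthogonality}. Since $v(x) = p^{\top} x$ is linear, its gradient is the constant vector $\nabla v(x) = p$, so in particular $\nabla v(x^*) = p$. To invoke Theorem \ref{thm: nonlinear_nonorthogonality} it therefore suffices to exhibit a real eigenvalue $\lambda$ of $A$ with $|\lambda|>1$ satisfying (H1) or (H2), together with a corresponding eigenvector $w$ for which $p^{\top} w \neq 0$.

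First I would appeal to the Perron–Frobenius theorem for nonnegative irreducible matrices. Because $A$ is nonnegative and irreducible, its spectral radius $\rho(A)$ is itself an eigenvalue of $A$ (the Perron root), which is real, positive, and simple, and which admits a strictly positive eigenvector $w \in \mathbb{R}^n$, i.e. $w_i > 0$ for every $i$. Set $\lambda = \rho(A)$ and let $w$ be this Perron eigenvector. Next I would verify that $\lambda > 1$: by hypothesis $A$ has some eigenvalue $\mu$ with $|\mu| > 1$, and since $\rho(A) = \max\{|\nu| : \nu \in \sigma(A)\}$, we obtain $\rho(A) \geq |\mu| > 1$. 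Hence $\lambda = \rho(A)$ is a real eigenvalue with $\lambda > 1$, and condition (H2) follows immediately, since $|\lambda|^2 = \rho(A)^2 > \rho(A)$ whenever $\rho(A) > 1$. This is exactly the automatic case flagged in the remark following Theorem \ref{thm: nonlinear_nonorthogonality}.

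Finally, I would check the non-orthogonality condition $\nabla v(x^*) \cdot w \neq 0$. Since $\nabla v(x^*) = p$, this amounts to $p^{\top} w \neq 0$. The vector $p$ is nonnegative and nonzero, so $p_j > 0$ for at least one index $j$, while every coordinate of the Perron eigenvector satisfies $w_i > 0$. Therefore $p^{\top} w = \sum_i p_i w_i \geq p_j w_j > 0$, and in particular $p^{\top} w \neq 0$. With $\lambda = \rho(A)$ (real, exceeding $1$, and satisfying (H2)) and eigenvector $w$ the Perron vector, all hypotheses of Theorem \ref{thm: nonlinear_nonorthogonality} are met, and we conclude that $x^*$ is a $v$-transient center.

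The argument carries essentially no analytic difficulty once Perron–Frobenius is in hand, the entire content being a verification of hypotheses; the one point requiring care is the positivity of $p^{\top} w$. This relies crucially on the \emph{strict} positivity of the Perron eigenvector, which is guaranteed precisely by the irreducibility of $A$, combined with the sign constraint that $p$ is nonnegative and nonzero. Were $p$ merely nonzero of indefinite sign, or were $w$ only nonnegative (as can happen for a reducible nonnegative matrix), the inner product could vanish and this step would fail; this is why both the irreducibility of $A$ and the nonnegativity of $p$ enter as genuine hypotheses.
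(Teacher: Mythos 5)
Your proposal is correct and follows exactly the same route as the paper: invoke Perron--Frobenius to get the simple real eigenvalue $\rho(A)>1$ with strictly positive eigenvector $w$, note that $(\mathrm{H2})$ holds automatically since $\rho(A)^2>\rho(A)$, and conclude via Theorem \ref{thm: nonlinear_nonorthogonality} using $\nabla v(x^*)\cdot w=p^\top w>0$. The paper's proof merely states this as a ``straightforward consequence,'' so your write-up simply makes explicit the same verifications the authors leave implicit.
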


\begin{proof}
    By the Perron–Frobenius Theorem (see for instance Theorem 8.4.4 of \cite{horn2012matrix}), $\rho(A) >0$ is an algebraically simple eigenvalue of $A$ with a corresponding positive eigenvector $w \in \mathbb{R}^n$. The conclusion is now a straightforward consequence of Theorem \ref{thm: nonlinear_nonorthogonality}.
\end{proof}

\begin{example} \label{example: example2}
    Consider the following discrete-time system
    \begin{align}
        x(t+1) = g\left((x(t), y(t)\right), \quad  y(t+1) = h\left(x(t), y(t)\right) \label{model: example2}
    \end{align}
    where $g(x, y) = \frac{ay}{1+x^2}$ and $h(x, y) = \frac{bx}{1+y^2}$. This model was obtained in \cite{elaydi2005introduction}. We assume that $a$ and $b$ are positive values satisfying $ab >1$. Note that the origin $(0,0)$ is a fixed point of \eqref{model: example2}. Also, the partial derivatives are
    \begin{align*}
        g_x = -\frac{2axy}{(1+x^2)^2}, \quad g_y = \frac{a}{1+x^2}, \quad h_x = \frac{b}{1+y^2}, \quad h_y =  -\frac{2bxy}{(1+y^2)^2}.
    \end{align*}
    Hence, the Jacobian at the origin reduces to $A = \begin{bmatrix}
        0 & a \\ b & 0
    \end{bmatrix}$ which is nonnegative, irreducible and has an eigenvalue $\lambda = \sqrt{ab} >1$.
    If we consider the observable function $v(x, y) = (1, 1) \cdot(x, y) = x+y$, then Corollary \ref{Corr: Perron-Frobenius} implies that the origin is a $v$-transient center of model system \eqref{model: example2}. Figure \ref{fig: example2} shows the time series of several trajectories of the system together with the corresponding magnitude of change in the observable $v$.
    

    \begin{figure}
    \centering
    \includegraphics[width=10cm]{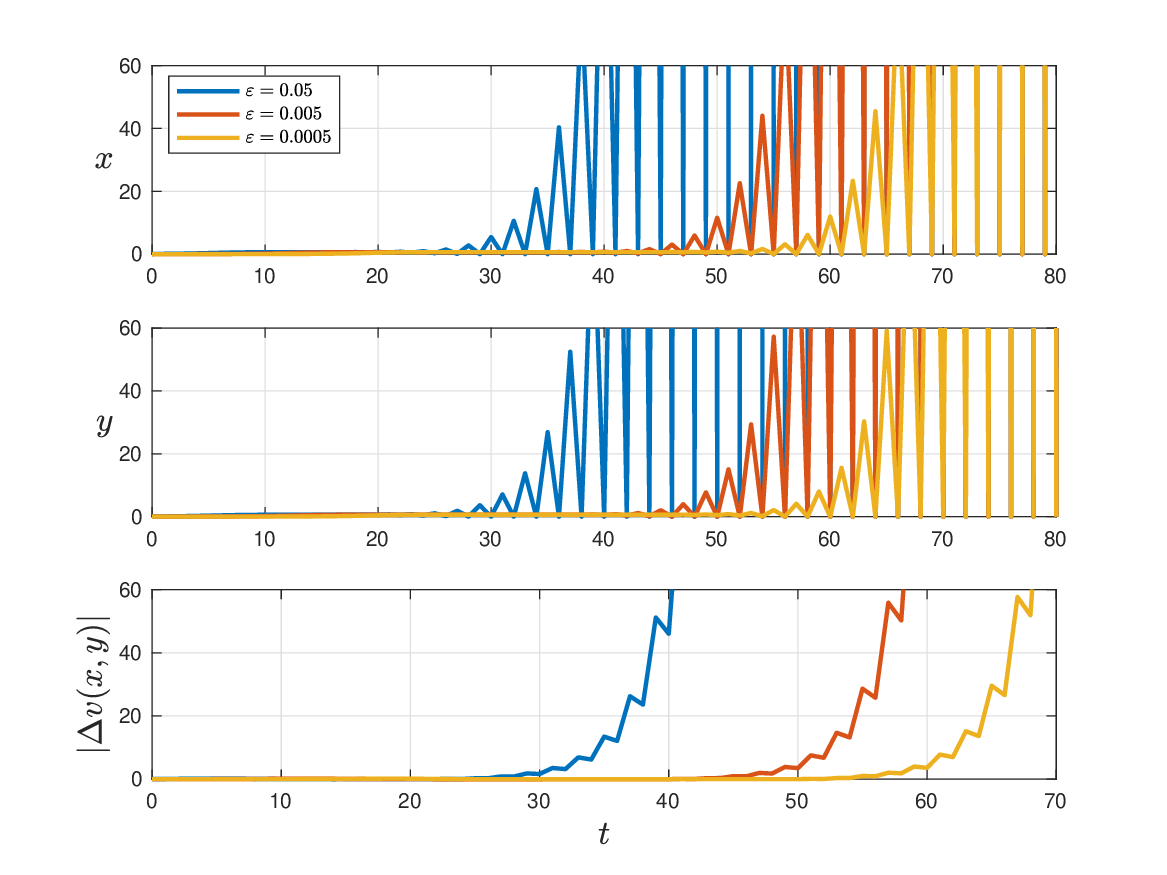}
    \caption{Trajectories of system \eqref{model: example2} for various initial conditions $\xi = \varepsilon w$ with $a =1.5$ and $b =1.3$. The lowest panel shows the magnitude of the observable increment $|\Delta v(x,y)|$ where $v(x,y) = x+y$.}
    \label{fig: example2}
\end{figure}
    
\end{example}


Note that Theorem \ref{thm: nonlinear_nonorthogonality} already breaks down in the situation where $\nabla v(x^*) =0$ because then $\nabla v(x^*) \cdot w=0$ for any eigenvector $w$, and no conclusion can be drawn. Our next result deals with a second-order criterion written in terms of the difference operator $\Delta v(x) = v(f(x)) - v(x)$. This result partially covers the case where we have $\nabla v(x^*) =0$.

\begin{theorem} \label{thm: flatness}
    Let $f \in C^2(\mathbb{R}^n, \mathbb{R}^n)$ and $v \in C^2(\mathbb{R}^n, \mathbb{R})$. If $x^*$ is an unstable fixed point of the nonlinear system \eqref{dtades} such that the gradient $\nabla \Delta v(x^*) = 0$ and the Hessian matrix $H_{\Delta v}(x^*)$ is positive (or negative) definite, then $x^*$ is a $v$-transient center.
\end{theorem}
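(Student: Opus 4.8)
The plan is to verify the hypotheses of Theorem \ref{thm: finite transient time}. Since $x^*$ is a fixed point, $f^t(x^*)=x^*$ and hence $\Delta v(f^t(x^*))=0$ for all $t$, so $x^*\in X^v$. It therefore suffices to produce a single threshold $S_*>0$ with the property that every neighborhood of $x^*$ contains a point $x$ whose $(v,S_*)$-transient time is finite, i.e. some forward iterate satisfies $|\Delta v(f^t(x))|>S_*$.

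First I would extract a quadratic lower bound on $|\Delta v|$ near $x^*$ from the second-order data. Because $v\in C^2$ and $f\in C^2$, the map $\Delta v = v\circ f - v$ is $C^2$, and by hypothesis $\Delta v(x^*)=0$ and $\nabla\Delta v(x^*)=0$. Writing the second-order Taylor expansion with Lagrange remainder and using the continuity of the Hessian together with the positive definiteness of $H_{\Delta v}(x^*)$ (smallest eigenvalue $\mu>0$), I obtain a radius $\rho>0$ and a constant $c=\mu/4>0$ such that
\[
|\Delta v(x)|\ \ge\ c\,\|x-x^*\|^2\qquad\text{for all } x\in B_\rho(x^*).
\]
The negative-definite case is identical after passing to $-\Delta v$, which is why the absolute value is the natural object. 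Thus, on the punctured ball, $|\Delta v|$ is bounded below by a definite positive amount as soon as a point sits at a fixed distance from $x^*$.

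Next I would exploit instability to drive orbits out to such a definite distance. Since $x^*$ is not Lyapunov stable, there is $\varepsilon_0>0$ so that every neighborhood contains a point whose orbit eventually leaves $B_{\varepsilon_0}(x^*)$; moreover escaping past $\varepsilon_0$ forces escaping past any smaller radius, so I may shrink $\varepsilon_0$ at will. Using the local Lipschitz estimate $\|f(y)-x^*\|=\|f(y)-f(x^*)\|\le L\|y-x^*\|$ valid near $x^*$, I fix $\varepsilon_0\le\min\{\rho,\,2\rho/L\}$ and, given an arbitrary neighborhood $U$, pick $x\in U$ with $\|x-x^*\|<\varepsilon_0$ whose orbit escapes $B_{\varepsilon_0}(x^*)$. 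Let $\tau\ge 1$ be the first exit time, so $\|f^{\tau-1}(x)-x^*\|<\varepsilon_0$ while $\|f^{\tau}(x)-x^*\|\ge\varepsilon_0$.

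The crux is then a one-step case analysis showing the orbit occupies the annulus where the lower bound bites. If $\|f^{\tau-1}(x)-x^*\|\ge\varepsilon_0/2$, then $f^{\tau-1}(x)$ already lies in $B_\rho(x^*)$ at distance at least $\varepsilon_0/2$. Otherwise $f^{\tau-1}(x)\in B_{\varepsilon_0/2}(x^*)$, and the Lipschitz bound gives $\|f^{\tau}(x)-x^*\|\le L\varepsilon_0/2\le\rho$, so $f^{\tau}(x)\in B_\rho(x^*)$ at distance at least $\varepsilon_0$. In either case the quadratic lower bound yields $|\Delta v(f^{t}(x))|\ge c(\varepsilon_0/2)^2$ for $t\in\{\tau-1,\tau\}$. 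Setting $S_*=\tfrac12 c(\varepsilon_0/2)^2$ gives $T_{S_*}(x)\le\tau<\infty$, and since $U$ was arbitrary, Theorem \ref{thm: finite transient time} certifies that $x^*$ is a $v$-transient center. The main obstacle is precisely this last step: in discrete time there is no intermediate-value control, so a single iterate can jump from deep inside $B_{\varepsilon_0}$ to far outside it, and the whole argument hinges on using the local Lipschitz constant of $f$ at the fixed point to cap that jump and keep the escaping iterate inside $B_\rho$, where the Hessian-driven lower bound remains valid.
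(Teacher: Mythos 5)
Your argument is correct, and the first half (the quadratic lower bound $|\Delta v(x)|\ge c\|x-x^*\|^2$ on a small ball, obtained from the Taylor expansion with $\Delta v(x^*)=0$, $\nabla\Delta v(x^*)=0$ and the definiteness of the Hessian) is exactly what the paper does. Where you diverge is at the step you correctly identify as the crux: forcing some iterate of a point in the given neighborhood to land where the bound both applies and is bounded away from zero. The paper handles this by applying the Intermediate Value Theorem in the space of \emph{initial conditions}: it takes an escaping point $y$, considers the continuous function $k\mapsto\|f^{t_\rho}(x^*+k(y-x^*))-x^*\|$ on $[0,1]$, which goes from $0$ to a value exceeding $\rho$, and so finds an initial condition in $U$ whose $t_\rho$-th iterate lies \emph{exactly} on the sphere of radius $\rho\le r$. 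This sidesteps the discrete-time jump problem entirely and uses nothing beyond continuity of $f^{t_\rho}$. You instead stay on a single escaping orbit, take the first exit time $\tau$ from $B_{\varepsilon_0}(x^*)$, and use a local Lipschitz constant $L$ of $f$ at the fixed point to cap the one-step jump, so that either $f^{\tau-1}(x)$ or $f^{\tau}(x)$ is trapped in the annulus $\{\varepsilon_0/2\le\|x-x^*\|\le\rho\}$; this requires the (legitimate) preliminary step of shrinking the instability radius to $\varepsilon_0\le 2\rho/L$. Both routes are sound and yield a valid $S_*$ for Theorem~\ref{thm: finite transient time}; the paper's IVT trick is slightly leaner in its hypotheses and bookkeeping, while your version is more explicitly dynamical and isolates precisely why a naive orbit-following argument could fail in discrete time.
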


\begin{proof}
     Since $ \Delta v$ is twice continuously differentiable at $x^*$, we can expand $\Delta v(x)$ using a Taylor series around $x^*$. Since $\Delta v(x^*) =0$ and $\nabla \Delta v(x^*) = 0$, we have
    \begin{align}
    \Delta v(x) =  \frac{1}{2}(x - x^*)^\top H_{\Delta v}(x^*) (x - x^*) + o(\|x - x^*\|^2) \label{eqn: taylor1}
    \end{align}
    where the nonzero Hessian matrix $H_{\Delta v}(x^*)$ is a symmetric matrix whose eigenvalues are real. Additionally, the positive or negative definiteness of $H_{\Delta v}(x^*)$ implies that all of its eigenvalues are either positive or negative, respectively. This assumption also gives us the bound 
    \begin{align*}
        |(x - x^*)^\top H_{\Delta v}(x^*) (x - x^*) | \geq \lambda \| x-x^*\|^2
    \end{align*}
    where $\lambda = \min\{|\lambda_i|~:~ \lambda_i~\text{is an  eigenvalue of $H_{\Delta v}(x^*)$}\}$. Now, by definition of little-o, there is an $r>0$ so small such that 
    \begin{align*}
        \left|o(\|x - x^*\|^2)\right| \leq \frac{\lambda}{4}\|x-x^*\|^2
    \end{align*}
    whenever $x \in \overline{B_r(x^*)} = \{x ~:~\|x-x^*\| \leq r\}$.
    Hence, we have from \eqref{eqn: taylor1} that
	\begin{align}
		|\Delta v(x) | \geq \frac{\lambda}{2} \| x-x^*\|^2, \quad \forall~ x \in \overline{B_r(x^*)}.\label{ineq: bound2}
	\end{align}

    Additionally, since $x^*$ is unstable, there exists some $\rho \in (0, r]$ such that for any neighborhood $U$ of $x^*$, there exists a $y \in U\setminus \{x^*\}$ and a time $t_\rho \in \mathbb{Z}^+$ such that $\|f^{t_\rho}(y) - x^*\| > \rho$. Consider the continuous function $g: [0, 1] \to \mathbb{R}$ defined by
    \begin{align*}
        g(k) = \|f^{t_\rho} \left(x^* + k(y - x^*)\right) - x^*\|.
    \end{align*}
    Note that $g(0) = \|f^{t_\rho}(x^*) - x^*\| =0 < \rho$ and $g(1) = \|f^{t_\rho}(y) - x^*\| > \rho$. Hence, we can apply the Intermediate Value Theorem and conclude that there is some $k^* \in (0,1)$ satisfying $g(k^*) =\rho$. This implies that if we initialize at $x = x^* + k^*( y - x^*) \in U$, then we have $\|f^{t_\rho}(x) -x^*\| = \rho$. 
    We now choose $S_* \in \left(0, \frac{\lambda}{2} \rho^2\right)$ and deduce from \eqref{ineq: bound2} that 
    \begin{align*}
		|\Delta v(f^{t_\rho}(x)) | \geq  \frac{\lambda}{2} \|f^{t_\rho}(x) - x^*\|^2 =  \frac{\lambda}{2} \rho^2 > S_*.
    \end{align*}
    This shows that $T_{S_*}(x) \leq t_\rho <\infty$ for such $x \in U$. Applying Theorem \ref{thm: finite transient time}, we conclude that $x^*$ is a $v$-transient center. 
\end{proof}

\begin{example}
    Consider the following one-dimensional example given by
    \begin{align*}
        x(t+1) = 2 x(t) + [x(t)]^3.
\end{align*}
Let $f(x) = 2 x+ x^3$ and $v(x) = x^2$. Then
\begin{align*}
    \Delta v(x) = v(f(x)) - v(x) = 3x^2 + 4x^4 + x^6.
\end{align*}
Note that $x^* =0$ is an unstable fixed point since $f'(0) = 2 >1$. Also, Theorem \ref{thm: nonlinear_nonorthogonality} does not apply here since $v'(0) = 0$. Nevertheless, we compute that $[\Delta v]'(0) = 0$, and $[\Delta v]''(0) = 6 >0$. Thus, Theorem \ref{thm: flatness} applies and $x^* =0$ is a $v$-transient center.
\end{example}

It is tempting to use the ideas behind the proof of Theorem \ref{thm: flatness} in order to obtain a similar first-order criterion by assuming that $f$ and $v$ are $C^1$, and the gradient $\nabla \Delta v(x^*) \neq 0$. However, this seemingly straightforward adaptation runs into a problem. Unlike in the case of Theorem $\ref{thm: flatness}$ where we obtained a uniform quadratic lower bound in every direction around the fixed point $x^*$, imposing the condition  $\nabla \Delta v(x^*) \neq 0$ can only control the directional growth of $\Delta v$ along the vector $\nabla \Delta v(x^*)$ itself, and not in orthogonal directions. If we Taylor expand $\Delta v$ around $x^*$, then its nonzero linear term satisfies
\begin{align*}
    |\nabla \Delta v(x^*) \cdot (x-x^*)| = \|\nabla \Delta v(x^*)\|\,\|x-x^*\| \,|\cos \theta|
\end{align*}
where $\theta$ is the angle between $\nabla \Delta v(x^*)$ and $\|x-x^*\|$. As $\theta \to \frac{\pi}{2}$, $\cos \theta \to 0$, and thus, no uniform linear bound of the form $|\Delta v(x)| \geq c \|x-x^2\|$, $\forall~ x\in \overline{B_r(x^*)}$ can hold no matter how small $r$ is. The only way to obtain such estimate is to restrict our attention to directions in which we assume (or perhaps know) the directional derivative will not vanish similar to what we did in Theorems \ref{thm: linear_nonorthogonality} and \ref{thm: nonlinear_nonorthogonality}.


\section{Application to Models in Ecology and Epidemiology}

The theoretical framework and criteria for identifying long-lasting and slowly varying transient dynamics developed in the previous sections provide a systematic basis for analyzing a wide class of discrete-time models. In this section, we present concrete examples drawn from ecology and epidemiology to illustrate how the concepts of transient points and transient centers manifest in real-world biological systems. By applying the main results to specific models, we demonstrate how these mathematical tools can be used to identify and predict long transients in populations and disease dynamics. We emphasize through these examples the relevance of long transient dynamics for interpreting and managing ecological and epidemiological systems.

\subsection{Predator-Prey Model}

In 2022, Streipert et al. \cite{streipert2022derivation} presented a new derivation and thorough analysis of a discrete-time predator–prey model, constructed directly from first principles rather than as a discretization of an existing continuous-time model. The model assumes that the prey population exhibits logistic growth in the absence of predators, converging to a carrying capacity, while the predator population requires prey to persist. Most importantly, their approach ensures biologically meaningful dynamics, such as the non-negativity of populations and satisfaction of the axiom of parenthood, that is, no population can arise from zero. The discrete-time model they proposed is given by
\begin{align}
\begin{cases}
x(t+1) = \frac{(1 + r) x(t)}{1 + \frac{r}{K} x(t) + \alpha y(t)} \\[2ex]
y(t+1) = \frac{[1 + \gamma x(t)] y(t)}{1 + d} 
\end{cases} \label{model: predator-prey}
\end{align}
where $x(t)$ and $y(t)$ are the are the prey and predator populations at time $t\in \mathbb{Z}^+_0$, $r>0$ is the intrinsic growth rate of the prey, $K>0$ is the prey population carrying capacity, $\alpha >0$ is the predation rate, $\gamma>0$ is the consumption-energy rate for the predator, and $d>0$ is the predator's natural death rate. 

Consider first the observable function $v_1(x, y) = x$ which depicts the prey population. We first formally prove that the fixed point $E_0 =(0,0)$ is a $v_1$-transient center and therefore cause arbitrarily slow dynamics for arbitrarily long periods of times. We apply Theorem \ref{thm: nonlinear_nonorthogonality}. At $E_0 =(0,0)$, the Jacobian matrix reduces to
\begin{align*}
    A_{E_0} = \begin{bmatrix}
        1+r & 0 \\ 0 & \frac{1}{1+d}
    \end{bmatrix}.
\end{align*}
The eigenvalues are $\lambda_1 = 1 + r > 1$ and $\lambda_2 = \frac{1}{1 + d} < 1$. The corresponding eigenvector for $\lambda_1 = 1 + r$ is $w_1 = (1, 0)$. Since $v_1(x, y) = x$, we have $\nabla v_1(E_0) = (1, 0)$, so $\nabla v_1(E_0) \cdot w_1 = 1 \neq 0$. Thus, all the hypotheses of Theorem \ref{thm: nonlinear_nonorthogonality} are satisfied, and $E_0$ is a $v_1$-transient center. In a similar manner, if we set $v_2(x,y) =y$ and assume that $d < \gamma K$, then the fixed point  $E_K = (K, 0)$ is a $v_2$-transient center. The Jacobian matrix 
\begin{align*}
    A_{E_K} = \begin{bmatrix}
        \frac{1}{1+r} & -\frac{\alpha K}{1+r} \\ 0 & \frac{1+\gamma K}{1+d}
    \end{bmatrix}
\end{align*}
has an eigenvalue $\lambda_1 = \frac{1}{1+r}<1$ and $\lambda_2 = \frac{1 +\gamma K}{1+d} > 1$ since $d < \gamma K$ with corresponding eigenvectors $w_1 = (1, 0)$ and $w_2 = \left(\frac{\alpha K}{(1+r)(\lambda_2 -\lambda_1)},~1\right)$. It is now immediate that $\nabla v_2(E_K)\cdot w_2 = 1 \neq 0$. Applying Theorem \ref{thm: nonlinear_nonorthogonality}, we see that $E_K = (K, 0)$ is a $v_2$-transient center of model system \eqref{model: predator-prey}. It is also a $v_1$-transient center under the same assumption that $d < \gamma K$ since $\nabla v_2(E_K)\cdot w_1 \neq 0$ as well.

\begin{figure}
\centering
\subfloat[]{%
\resizebox*{7cm}{!}{\includegraphics{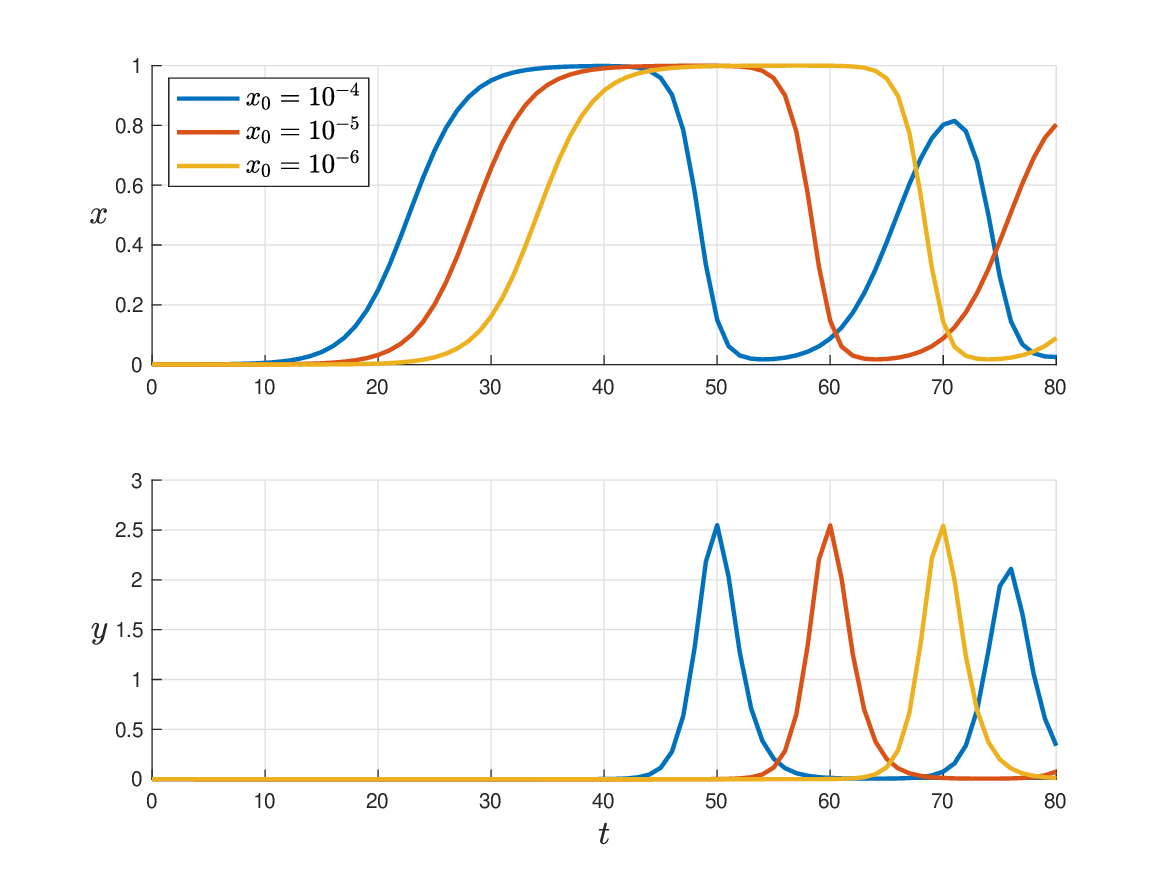}}}\hspace{5pt}
\subfloat[]{%
\resizebox*{7cm}{!}{\includegraphics{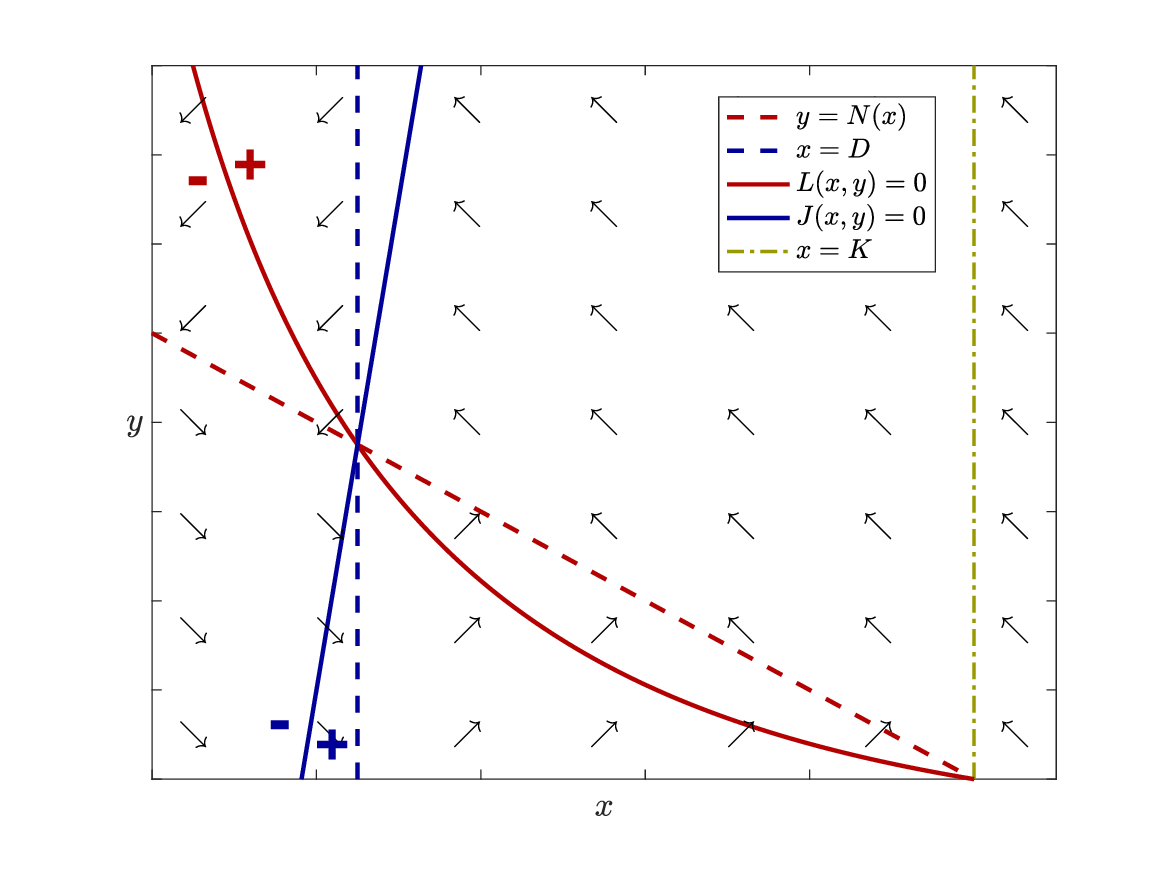}}}
\caption{(a) Sample trajectories of model system \eqref{model: predator-prey} subject to several initial value $x_0$. The other parameters are $r = 0.5$, $K = 1.0$, $\alpha = 1.0$, $\gamma = 4.0$, $d= 1.0$, and $y_0 = 10^{-4}$. (b) The augmented phase portrait for the same predator-prey model. The dashed red and blue lines are the prey ($x$) and predator ($y$) nullclines. The solid red and blue curves are their respective next-iterate root curves. The $'+'$ and $'-'$ symbols indicate the sign of the next-iterate operator in various regions, and the black arrows depict the direction field. The definitions of $D$, $N$, $L$, and $J$ are in the proof of Theorem \ref{thm: pp}.
}
\label{fig: pp_trajectory}
\end{figure}

The trajectories shown in Figure \ref{fig: pp_trajectory}(a) display long-lasting and slowly varying transience about the fixed-point $E_0 =(0,0)$ under the prey population $v_1(x,y) =x$. Furthermore, an examination of the same trajectories shows that the predator population $v_2(x,y) =y$ also remains near zero for an arbitrarily long yet finite period of time. This suggests that $E_0 =(0,0)$ might also be a $v_2$-transient center. The only problem is that neither the hypothesis of Theorem \ref{thm: nonlinear_nonorthogonality} nor that of Theorem \ref{thm: flatness} holds in this case. Nevertheless, the following theorem formalizes this observation. This theorem also shows the existence of transient centers that are not fixed points for model system \eqref{model: predator-prey}.

\begin{theorem} \label{thm: pp} 
    Let $v_2(x,y) =y$. If $d < \gamma K$, then the set $\chi_D = \left\{(x, 0)~:~ 0 \leq x \leq \frac{d}{\gamma} \right\}$ comprises of $v_2$-transient centers. In particular, the fixed point $E_0 =(0,0)$ is a $v_2$-transient center of model \eqref{model: predator-prey}.
\end{theorem}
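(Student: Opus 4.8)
The plan is to apply the characterization in Theorem~\ref{thm: finite transient time}, for which I first need the explicit observable increment. A direct computation from \eqref{model: predator-prey} gives
\[
\Delta v_2(x,y) = \frac{(1+\gamma x)\,y}{1+d} - y = \frac{\gamma x - d}{1+d}\,y,
\]
so $\Delta v_2$ vanishes precisely on the two nullcline pieces $\{y=0\}$ and $\{x=d/\gamma\}$. Because the predator equation forces $y(t+1)=0$ whenever $y(t)=0$, the line $\{y=0\}$ is forward invariant and $\Delta v_2(f^t(x,0))=0$ for every $t$; hence $\chi_D\subset X^{v_2}$ and each $(x_*,0)\in\chi_D$ is a candidate transient center. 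It then suffices to exhibit a single $S_*>0$ such that every neighbourhood of an arbitrary $(x_*,0)\in\chi_D$ contains a point with finite $(v_2,S_*)$-transient time.

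For this I would perturb off the axis into the interior of the first quadrant, taking $\xi=(x_*+\delta,\varepsilon)$ with $\delta,\varepsilon>0$ small enough that $\xi$ lies in the prescribed neighbourhood; a strictly positive prey component is needed, since $\xi=(0,\varepsilon)$ stays on the invariant line $\{x=0\}$ and never escapes. The hypothesis $d<\gamma K$ now does the essential work: it yields $d/\gamma<K$, so while the predator is still negligible the prey obeys, to leading order, the Beverton--Holt map $x\mapsto (1+r)x/(1+\frac{r}{K}x)$ whose globally attracting fixed point is $K$, and therefore $x_t$ climbs past the predator threshold $d/\gamma$. Once $x_t$ is trapped above some $x^\dagger\in(d/\gamma,K)$ the predator multiplier $\frac{1+\gamma x_t}{1+d}$ exceeds a fixed $\mu>1$, so $y_t$ grows geometrically and $\Delta v_2(x_t,y_t)=\frac{\gamma x_t-d}{1+d}\,y_t$ is eventually pushed above any fixed level. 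Choosing $S_*$ to be a fixed fraction of $\frac{(\gamma x^\dagger-d)}{1+d}\,\beta$, where $\beta>0$ is a predator level attained before the prey can drop back through $x^\dagger$, then gives $T_{S_*}(\xi)<\infty$, and Theorem~\ref{thm: finite transient time} identifies $(x_*,0)$ as a $v_2$-transient center; the ``in particular'' claim for $E_0=(0,0)$ is the case $x_*=0$.

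The main obstacle is the predator--prey coupling: as $y_t$ grows, the term $\alpha y_t$ in the prey denominator depresses the prey, so I cannot simply assume $x_t$ remains near $K$. The delicate requirement is that the predator reach the fixed level $\beta$---which must be independent of the vanishing $\delta,\varepsilon$---while the prey is still above $x^\dagger$, even though the number of geometric-growth steps needed grows like $\log(1/\varepsilon)$. To make this rigorous I would invoke the augmented phase-portrait method of Streipert et al., introducing the next-iterate root curves attached to the prey and predator nullclines and reading off the sign of the associated next-iterate operators in the regions $D$, $N$, $L$, $J$ of Figure~\ref{fig: pp_trajectory}(b). The resulting sign pattern traps the perturbed orbit in a region where the prey stays bounded below by $d/\gamma$ and the predator strictly increases until $|\Delta v_2|$ first exceeds $S_*$. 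This both converts the heuristic above into a proof and, since the points of $\chi_D$ are generally not equilibria of \eqref{model: predator-prey}, furnishes the promised examples of transient centers that are not fixed points.
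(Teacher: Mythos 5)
Your overall strategy is the paper's: compute $\Delta v_2(x,y)=\frac{(\gamma x-d)y}{1+d}$, note that $\{y=0\}$ is forward invariant so $\chi_D\subset X^{v_2}$, perturb into the open first quadrant, and apply Theorem~\ref{thm: finite transient time} after showing the perturbed orbit forces $|\Delta v_2|$ above a fixed threshold in finite time. You also correctly isolate the one genuinely delicate point: the predator must reach a level that is \emph{independent} of the perturbation size while the prey is still above $d/\gamma$, even though the number of predator-growth steps needed scales like $\log(1/\varepsilon)$.

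That delicate point, however, is exactly where your argument stops. You assert that the next-iterate sign pattern ``traps the perturbed orbit in a region where the prey stays bounded below by $d/\gamma$ and the predator strictly increases until $|\Delta v_2|$ first exceeds $S_*$,'' and this does not follow from the sign pattern alone; two quantitative ingredients are missing. First, the initial predator level must be coupled to the prey's climb time: the paper takes $0<y_0<N(x_*)/(2\kappa^{\tau})$, where $x_*<x_+$ are the abscissas at which the level curve $\Delta v_2=s$ meets the prey nullcline $y=N(x)$, $\kappa=(1+\gamma x_*)/(1+d)$ bounds the per-step predator growth, $\eta>1$ bounds the per-step prey growth from below while $y<N(x_*)/2$, and $\tau$ is the least $t$ with $D\eta^{t}\ge x_*$; this is what guarantees, via a contradiction argument, that the prey reaches $x_*$ before the predator interferes. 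Second, ``until $|\Delta v_2|$ first exceeds $S_*$'' is established by an exit analysis rather than a trapping argument: the orbit leaves $R=\{D<x<K,\ 0<y<N(x)\}$ through the nullcline, and one must check that whether it exits with $x\in[x_*,x_+]$ (where the level curve lies below the nullcline, so exiting already means $\Delta v_2>s$) or with $x>x_+$ (where it lies above), the orbit meets the level curve before the prey falls back past $x_+$. The paper handles the latter case by proving the region above the nullcline is forward invariant for $D<x<K$ (a monotonicity computation on the next-iterate operator $L$), so the decreasing prey carries the still-above-nullcline orbit into the strip $[x_*,x_+]$, where above the nullcline implies $\Delta v_2>s$. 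Finally, your threshold is left vague: the paper fixes $s$ below the explicit bound $\frac{r\gamma(K-D)^2}{4\alpha K(1+\gamma D)}$ precisely so that the level curve meets the nullcline at two points, which is what makes the exit analysis close. None of this is out of reach from where you stand, but as written the central estimate of the proof is asserted rather than proved.
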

\begin{proof}
     For simplicity, we denote by $\{(x_t, y_t)\}_{t \in \mathbb{Z}^+_0}$ the trajectory of the predator-prey model \eqref{model: predator-prey}. Note that
    \begin{align}
        \Delta v_2(x,y) =  \frac{y(\gamma x -d)}{1+d} \label{eqn: deltav2}.
    \end{align}
    Using \eqref{eqn: deltav2}, it is easy to check that $\chi_D \subset X^{v_2}$ where $X^{v_2}$ defined in \eqref{X} is the set of all $v_2$-transient center candidates. This is due to $y_t =0$ for all $t \in \mathbb{Z}^+$ provided that $y_0 =0$. We pick an arbitrary element of $\chi_D$ and choose an arbitrary neighborhood $U$ of this member. Our strategy is to apply Theorem \ref{thm: finite transient time}. We construct a transient point $(x_0, y_0) \in U$ whose $(v, s)$-transient time is finite for some $s>0$. Before we do this rigorously, we first outline the geometric intuition of such construction. By using the augmented phase plane approach for discrete planar maps introduced by Streipert and Wolkowicz \cite{streipert2023augmented} in 2023, we perform a phase plane analysis of model system \eqref{model: predator-prey}. The augmented phase portrait is shown in Figure \ref{fig: pp_trajectory}(b). Let
    \begin{align*}
        D = \frac{d}{\gamma}< K \quad \text{and} \quad N(x) = \frac{r}{\alpha}\left(1 - \frac{x}{K}\right).
    \end{align*}
    Note that the lines $y =N(x)$ and $x=D$ are the respective prey ($x$) and predator ($y$) nullclines. Note that $x_t$ is increasing if $y_t < N(x_t)$ and decreasing if $y_t > N(x_t)$. Similarly, $y_t$ is increasing when $x_t >D$ and decreasing when $x_t < D$. We also set 
    \begin{align*}
        L(x_t, y_t) =y_{t+1} - N(x_{t+1}) \quad \text{and} \quad J(x_t, y_t) = x_{t+1}-D.
    \end{align*}
    The functions $L(x,y)$ and $J(x,y)$ are called the next-iterate operators associated with the $y=N(x)$ and $x=D$ nullclines, respectively \cite{streipert2023augmented}. The sign of the operator determines on which side of the associated nullcline the next-iterate lies. If $L(x_t, y_t) >0$ (resp., $L(x_t, y_t) <0$), then $(x_{t+1}, y_{t+1})$ lies above (resp., below) the nullcline $y=N(x)$. If $L(x_t, y_t) =0$, then $(x_{t+1}, y_{t+1})$ lies on $y=N(x)$. The same interpretation holds for the next-iterate operator $J(x, y)$. We add here that the curves defined implicitly by $L(x, y)=0$ and $J(x,y) =0$ are called the next-iterate root-curves associated with the nullclines $y=N(x)$ and $x=D$ \cite{streipert2023augmented}.

    By utilizing the dynamics shown in Figure \ref{fig: pp_trajectory}(b), we establish our design in the following manner. A geometric picture of this set-up is presented in Figure \ref{fig: pp_proof}(a). First, we choose a fixed constant $s>0$ which determines the slow region in such a way that the curve $\Delta v_2(x,y) = s$ intersects the nullcline $y=N(x)$ at exactly two points in the first quadrant. Let us call the $x$-values of the intersection points as $x_*$ and $x_+$ where $x_* < x_+$. Next, we construct an initial point $(x_0, y_0) \in U$ so that the trajectory $(x_t, y_t)$ hits or crosses the curve $\Delta v_2(x,y) = s$ in finite time. For this scenario to happen, the trajectory must enter the region $R = \{(x,y) ~:~ D< x < K,~ 0 <y < N(x)\}$ at some finite time, say $t_{\text{in}} \in \mathbb{Z}^+$. Then, it must exit through the nullcline $y= N(x)$ at some time $t_{\text{out}} > t_{\text{in}}$ where $x_{t_{\text{out}}} \geq x_*$ must hold.

    We shall now give a comprehensive and rigorous proof. Set 
    \begin{align*}
        0 < s < S:= \frac{r\gamma(K-D)^2}{4\alpha K(1+\gamma D)}.
    \end{align*}
    We first show that the curve $\Delta v_2(x, y)=s$ has two intersections with the line $y=N(x)$ in the first quadrant. A quick elementary algebraic manipulation reveals that the roots of the quadratic equation $Ax^2 + Bx +C=0$ where
    \begin{align*}
        A = -\frac{r\gamma}{\alpha K}, \quad B = \frac{r}{\alpha} \left(\gamma + \frac{d}{K}\right), \quad \text{and} \quad  C = - \left[\frac{rd}{\alpha} + s(1+d)\right],
    \end{align*}
    correspond to the $x$ values at the intersections. For a concave down quadratic (since $A <0$) with $B>0$, it suffices that the discriminant $\overline{\Delta} = B^2-4AC$ is positive for it to have two real roots. A straightforward rearrangement tells us that $\overline{\Delta} >0$ is equivalent to $0 < s < S$. Thus, there are two real roots to the quadratic equation, say $x_*$ and $x_{+}$ where $D < x_* < x_+ < K$. We now set
    \begin{align*}
        \eta = \frac{1+r}{1 + \frac{r}{K}x_* + \frac{\alpha}{2} N(x_*)} \quad \text{and} \quad \kappa =  \frac{1+\gamma x_*}{1+d}.
    \end{align*}
    Note that $\eta$ and $\kappa$ are both more than one since 
    \begin{align*}
        1+r = 1 + \frac{r}{K}x_* + \alpha N(x_*) >1 + \frac{r}{K}x_* + \frac{\alpha}{2} N(x_*) 
    \end{align*}
    and $1 + \gamma x_* > 1 + \gamma D = 1+d$. We pick our initial conditions $(x_0, y_0) \in U$ satisfying
    \begin{align*}
        0 < x_0 < D \quad\text{and} \quad 0 < y_0 < \frac{N(x_*)}{2 \kappa^\tau }
    \end{align*}
    where $\tau = \min\{t \in \mathbb{Z}^+ ~:~ D \eta^t \geq x_*\}$. 

\begin{figure}
\centering
\subfloat[]{%
\resizebox*{7cm}{!}{\includegraphics{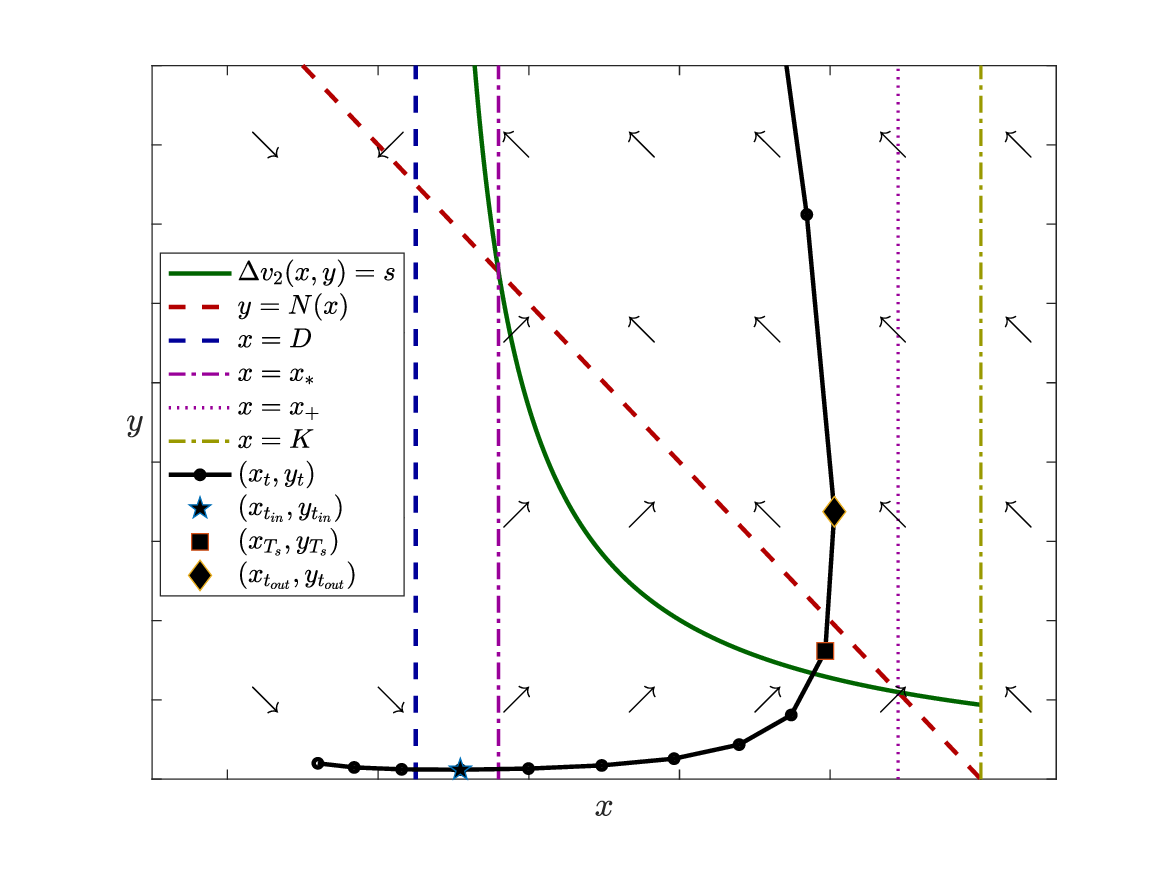}}}\hspace{5pt}
\subfloat[]{%
\resizebox*{7cm}{!}{\includegraphics{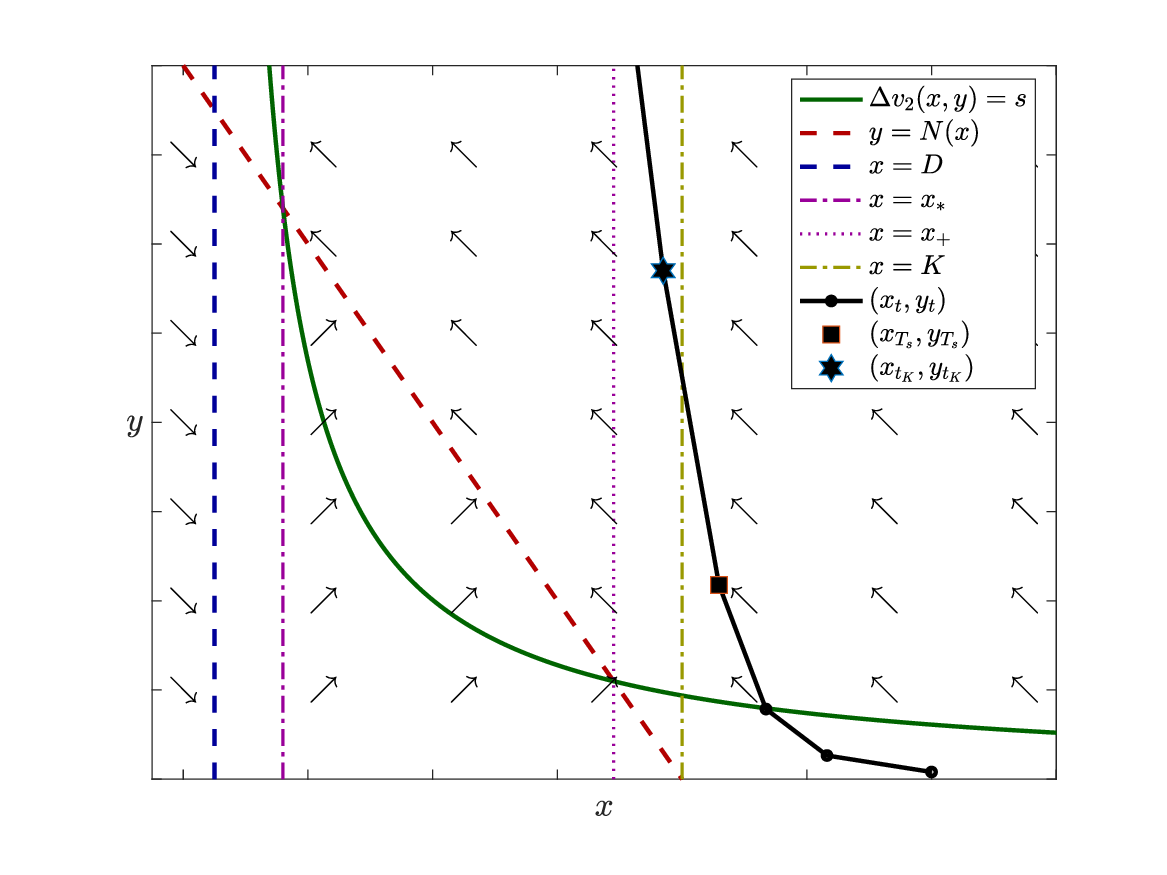}}}
\caption{Illustration of the proof of (a) Theorem \ref{thm: pp} and (b) Theorem \ref{thm: pp1}. } \label{fig: pp_proof}
\end{figure}
    
    In the following, we list some properties of the system's orbit $(x_t, y_t)$ initialized at $(x_0, y_0)$.
    \begin{enumerate}
        \item[(i)] The trajectories satisfy $0 < x_t < K$ and $y_t >0$ for all $t \in \mathbb{Z}^+_0$. 

        We proceed by induction on $t \in \mathbb{Z}^+_0$. Clearly, we have $0 <x_0 < D< K$ and $y_0 >0$. If $0 < x_t < K$ and $y_t >0$ for some $t \in \mathbb{Z}^+_0$, then we clearly have $x_{t+1} >0$ and $y_{t+1} >0$ from $\eqref{model: predator-prey}$. Moreover, we have
        \begin{align*}
            x_{t+1} = \frac{(1 + r)x_t}{ 1+\frac{r}{K} x_{t} + \alpha y_t} \leq \frac{(1+r)x_t}{1 + \frac{r}{K}x_t} < K~\text{if and only if}~x_t < K.
        \end{align*}
    
        \item[(ii)] The orbit enters the region $R = \{(x,y) ~:~ D< x < K,~ 0 <y < N(x)\}$.
        
        It is clear that $y_0 < \frac{N(x_*)}{2 \kappa^\tau} < \frac{N(x_*)}{2} < N(x_*) < N(D)$ where the last inequality follows from the fact that $N(x)$ is strictly decreasing in $x$ on $[0, K]$. This means that $y_0 < N(D)$ as well. While $x_t \leq D$ and $y_t < N(D)$, we see that $y_{t} < N(D) \leq N(x_t)$. This implies that 
        \begin{align*}
            x_{t+1} = \frac{(1+r)x_t}{1 + \frac{r}{K}x_t + \alpha y_t} > \frac{(1+r)x_t}{1 + \frac{r}{K}x_t + \alpha N(x_t)} = x_t
        \end{align*}
        showing that $x_t$ is strictly increasing. Hence, there must exist a finite time $t$ such that $x_t > D$. We let $t_{\text{in}} = \min\{t \in \mathbb{Z}^+~:~ x_t > D\} < \infty$. Moreover, as long as $x_t \leq D$, we have
    \begin{align*}
        y_{t+1} = \frac{(1+\gamma x_t) y_t}{1+d} \leq \left(\frac{1 + \gamma \left(\frac{d}{\gamma}\right)}{1+d}\right) y_t = y_t,
    \end{align*}
    and so $0 <y_{t_{\text{in}}} \leq y_0 < \frac{N(x_*)}{2} < N(D) < N(x_t)$ by induction on $t$. Thus, the orbit has entered the set $R$ at time $t_{\text{in}}$.

    \item[(iii)] The trajectories $x_t$ and $y_t$ are both strictly increasing in $R$.

    Both $x_t$ and $y_t$ are strictly increasing in the region $R$ because 
    \begin{align*}
        x_{t+1} = \frac{(1+r)x_t}{1 + \frac{r}{K}x_t + \alpha y_t} > \frac{(1+r)x_t}{1 + \frac{r}{K}x_t + \alpha N(x_t)} = x_t
    \end{align*}
    and
    \begin{align*}
        y_{t+1} =\frac{1 + \gamma x_t}{1+d} y_t > \frac{1+\gamma D}{1+d} y_t = y_t.
    \end{align*}

    \item[(iv)] Recall that $\tau = \min\{t \in \mathbb{Z}^+ ~:~ D \eta^t \geq x_*\}$. There exists $k\in 0\mathbin{:}\tau$ such that $x_{t_{\text{in}} +k} \geq x_*$.

    We first recall that whenever $x_t < x_*$ and $y_t < \frac{N(x_*)}{2}$,
    \begin{align}
        y_{t+1} = \frac{1 + \gamma x_t}{1+d} y_t \leq \frac{1 + \gamma x_*}{1+d} y_t = \kappa y_t  \label{eqn: kappa}
    \end{align}
    and
    \begin{align}
        x_{t+1} = \frac{(1 + r)x_t}{ 1+\frac{r}{K} x_{t} + \alpha y_t} \geq \frac{1+r}{1 + \frac{r}{K}x_* + \frac{\alpha}{2} N(x_*)} x_t = \eta x_t. \label{eqn: eta}
    \end{align}
    We now proceed to the proof. Assume on the contrary that $x_{t_{\text{in}} + k} < x_*$ for all $k\in 0\mathbin{:}\tau$. A repeated application of \eqref{eqn: kappa} gives us
    \begin{align*}
        y_{t_{\text{in}} + k} \leq \kappa^k y_{t_{\text{in}}} \leq \kappa^k y_0 \leq \kappa^\tau y_0 <\frac{N(x_*)}{2}
    \end{align*}
    for any $k\in 0\mathbin{:}\tau$. This means that we can also apply \eqref{eqn: eta} repeatedly and obtain
    \begin{align*}
        x_{t_{\text{in}} + \tau} \geq \eta^\tau x_{t_{\text{in}}} \geq \eta^\tau D \geq x_*,
    \end{align*}
    which is a contradiction. 
    \end{enumerate}    
    
    Recall that the sequence $\Delta v_2(x_t, y_t)$ is strictly increasing on $R$ since both $x_t$ and $y_t$ are strictly increasing in $R$ as seen in (iii). As $x_t$ and $y_t$ both increase in $R$, the orbit will eventually leave the region $R$ through the line $y= N(x)$ due to (i). Thus, there exists a finite time $t > t_{\text{in}}$ such that $(x_{t}, y_{t})$ satisfies $y_t > N(x_t)$. Let us call this time $t$ as $t_{\text{out}}$. We consider the following cases. 
    \begin{enumerate}
        \item[(a)] $D<x_{t_{\text{out}}} < x_*$; 
        
        Note that this case is not possible since we must have $x_{t_{\text{out}}} \geq x_*$ due to (iv).
        
        \item [(b)] $x_* \leq x_{t_{\text{out}}} \leq x_+$; 
        
        First, we remark that the curve $\Delta v_2(x, y) =  s$ lies below the line $y = N(x)$ on the interval $x_* < x < x_+$ and they intersect at the endpoints. In this particular case, we know that the orbit already crossed the curve $\Delta v_2(x, y) =  s$ by the time the orbit exits the region $R$. This implies that the $(v_2, s)$-transient time $T_{s}(x_0, y_0)$ is finite.
        
        \item [(c)] $x_+<x_{t_{\text{out}}} < K$; 
        
        Note that the curve $\Delta v_2 (x,y) = s$ lies above the line $y = N(x)$ on this interval. At time $t_{\text{out}}$, the orbit now lies in $R_{\text{out}} := \{(x,y)~:~ D < x <K, ~ y > N(x)\}$. In the set $R_{\text{out}}$, we have $x_{t+1} < x_t$ and $y_{t+1} > y_t$. This observation can be easily deduced from the proof of (iii) where we reverse all inequalities. We first show that if $(x_t, y_t) \in R_{\text{out}}$, then $y_{t+1} > N(x_{t+1})$. We write the next-iterate operator into
        \begin{align*}
            L(x_t, y_t) := y_{t+1} - N(x_{t+1}) = \frac{1+\gamma x_t}{1+d}y_t + \frac{r(1+r)x_t}{\alpha K \left(1+ \frac{r}{K}x_t +\alpha y_t \right)} -\frac{r}{\alpha}.
        \end{align*}
        Fixing $D <x_t<K$, we have for all $y_t > N(x_t)$ that
        \begin{align*}
            \frac{\partial}{\partial y_t } L(x_t, y_t) &= \frac{1+\gamma x_t}{1+d} - \frac{r(r+1)x_t}{K \left(1+ \frac{r}{K}x_t +\alpha y_t \right)^2} \\
            & > \frac{1+\gamma D}{1+d} - \frac{r(r+1) x_t}{K \left(1+ \frac{r}{K}x_t +\alpha N(x_t) \right)^2} \\
            & > 1 - \frac{r(r+1)K}{K(1+r)^2} = 1 - \frac{r}{1+r} = \frac{1}{1+r} >0.
        \end{align*}
        So $L(x_t, y_t)$ is increasing in $y_t$ in the set $R_{\text{out}}$. Since 
        \begin{align*}
            L(x_t, N(x_t)) = \frac{r}{\alpha} \left(1 - \frac{x_t}{K}\right) \left(\frac{\gamma x_t-d}{1+d}\right) >0~\text{for all}~ D < x_t < K,
        \end{align*}
        we get $L(x_t, y_t) > L(x_t, N(x_t)) >0$. Hence, $y_{t+1} > N(x_{t+1})$. We have shown that once the orbit exits $R$, it will never go back to $R$ in the succeeding iterates as long as the $x_t$ values satisfy $D < x_t < K$. Now, since $x_{t_{\text{out}}} > x_+$ and $x_t$ is strictly decreasing afterwards, there is a first time $t_+ = \min\{t > t_{\text{out}}~:~ x_t \leq x_+\}$. At this time $t_+$, we know that $x_{t_+} \leq x_+$ and $y_{t_+} > N(x_{t_+})$ as shown above. This means that the orbit already crossed the curve $\Delta v_2(x, y) =s$ before time $t_+$ implying that  $T_s(x_0, y_0) \leq t_{+} < \infty$. 
    \end{enumerate}
      We have shown in (a)-(c) that the orbit crosses the curve $\Delta v_2(x, y) = s$. It is now immediate from Theorem \ref{thm: finite transient time} that any member of $\chi_D$ is a $v_2$-transient center of model system \eqref{model: predator-prey} completing the proof.
\end{proof}

Although our proof above focused on the set $\chi_D$, the very same argument with minor modifications actually applies to every point on the entire nonnegative $x$-axis, that is, every point in the larger set $\chi = \{(x, 0)~:~x \geq 0\}$ is also a $v_2$-transient center. We make this observation precise in the following result, which extends our earlier theorem to the whole nonnegative $x$-axis.

\begin{theorem} \label{thm: pp1}
    Every member of $\chi = \{(x, 0)~:~x \geq 0\}$ is a $v_2$-transient center provided that $d < \gamma K$.
\end{theorem}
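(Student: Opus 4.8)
The plan is to invoke Theorem \ref{thm: finite transient time}, exactly as in the proof of Theorem \ref{thm: pp}. Since $\chi_D=\{(x,0):0\le x\le D\}$ has already been shown to consist of $v_2$-transient centers, the only genuinely new points are those of the form $(\bar x,0)$ with $\bar x>D:=d/\gamma$. Each such point lies in $X^{v_2}$: because $y_0=0$ forces $y_t=0$ for all $t$, we have $\Delta v_2(f^t(\bar x,0))=0$ along the whole (invariant) nonnegative $x$-axis by \eqref{eqn: deltav2}. It therefore suffices to produce a single threshold $S_*>0$ such that every neighborhood $U$ of $(\bar x,0)$ contains a point whose $(v_2,S_*)$-transient time is finite.

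First I would fix an intermediate prey level $D'\in(D,\min\{\bar x,K\})$, which guarantees both $\gamma D'-d>0$ and $N(D')=\frac{r}{\alpha}(1-D'/K)>0$, and set the geometric growth factor $\rho=\frac{1+\gamma D'}{1+d}>1$. The candidate initial conditions are the perturbations $(x_0,y_0)\in U$ with $x_0$ close to $\bar x$ (so that $x_0>D'$) and $y_0>0$ small enough that $y_0\le N(D')$; such points exist in every neighborhood of $(\bar x,0)$. The crux is a trapping estimate for the half-strip $\{x\ge D',\,0<y\le N(D')\}$: a direct manipulation of the prey update in \eqref{model: predator-prey} shows that $x_t\ge D'$ together with $y_t\le N(D')$ forces $x_{t+1}\ge D'$, the inequality reducing precisely to $y_t\le N(D')$ through the definition of the nullcline $N$. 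Simultaneously, whenever $x_t\ge D'$ the predator update gives $y_{t+1}=\frac{1+\gamma x_t}{1+d}\,y_t\ge\rho\,y_t$.

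Next I would let $T_1=\min\{t:y_t>N(D')\}$ be the first escape time from the strip. An induction using the trapping estimate shows $x_t\ge D'$ for all $t\le T_1$, whence $y_t\ge\rho^t y_0$ on that range; since $\rho>1$, the bound $y_t\le N(D')$ for $t<T_1$ can persist only finitely long, so $T_1<\infty$. At the escape time one has $x_{T_1}\ge D'$ and $y_{T_1}>N(D')$, so by \eqref{eqn: deltav2},
\[
|\Delta v_2(x_{T_1},y_{T_1})|=\frac{y_{T_1}(\gamma x_{T_1}-d)}{1+d}>\frac{N(D')(\gamma D'-d)}{1+d}=:S_0>0 .
\]
Choosing $S_*=S_0/2$ then yields $T_{S_*}(x_0,y_0)\le T_1<\infty$, and Theorem \ref{thm: finite transient time} identifies $(\bar x,0)$ as a $v_2$-transient center, completing the proof.

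The main obstacle is the familiar tension near an invariant set carrying an expanding transverse direction: staying close to the $x$-axis favours tiny $y_0$, yet the argument needs $y_t$ to grow past a fixed level. The strip-invariance resolves this by decoupling the two effects — as long as the predator has not reached the nullcline height $N(D')$, the prey cannot fall below $D'$, so geometric growth of $y_t$ is guaranteed until the orbit breaks out with a definite, neighborhood-independent increment in the observable. A pleasant by-product is that the two regimes $\bar x\le K$ (prey rising toward $K$) and $\bar x>K$ (prey falling toward $K$) are treated uniformly, so the three-case split used in the proof of Theorem \ref{thm: pp} is avoided entirely.
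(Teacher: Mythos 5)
Your proof is correct, and it takes a genuinely different and more self-contained route than the paper's. The paper handles $\tilde{x}\in(D,K]$ by re-entering the machinery of Theorem \ref{thm: pp} (the two intersection points $x_*,x_+$ of $\Delta v_2=s$ with the prey nullcline, the entry/exit times for the region $R$, and the three-case analysis on where the orbit exits), and for $\tilde{x}>K$ it additionally invokes Proposition 8 of Streipert et al.\ to guarantee that $x_t$ eventually drops below $K$. You replace all of this with a single trapping estimate: on the half-strip $\{x\ge D',\,0<y\le N(D')\}$ the prey update is increasing in $x$ and decreasing in $y$, so $f_1(x,y)\ge f_1(D',N(D'))=D'$ (the point $(D',N(D'))$ lies on the prey nullcline), which keeps $x_t\ge D'$ exactly as long as $y_t\le N(D')$; meanwhile $x_t\ge D'>D$ forces $y_{t+1}\ge\rho y_t$ with $\rho>1$, so the orbit must break out of the strip in finite time with $|\Delta v_2|$ bounded below by the neighborhood-independent constant $S_0=\frac{N(D')(\gamma D'-d)}{1+d}$. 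This buys a uniform treatment of all $\bar{x}>D$ (no split at $x=K$, no appeal to external global results, no next-iterate-operator computations), at the cost of giving less geometric information about where the orbit crosses the level set $\Delta v_2=s$. One point worth making explicit if you write this up: your strip construction genuinely requires $\bar{x}>D$ (you need room to choose $D'\in(D,\bar{x})$), so the reduction of the points with $0\le\bar{x}\le D$ to Theorem \ref{thm: pp} is not cosmetic but essential, and your opening sentence correctly records this.
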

\begin{proof}
    First, we give the remark that all members of $\chi$ belong to our set of $v_2$-transient center candidates $X^{v_2}$ defined in \eqref{X}. We just need to consider the cases where $D<x \leq K$ and $x>K$. Throughout this discussion, we retain the definitions of all constructions and notations from the previous theorem, unless otherwise indicated. Note that Theorem \ref{thm: pp} also covers the case where $x \in (D, K]$. Indeed, if we pick a neighborhood $U$ of $(x, 0)$ and initialize at a point $(x_0, y_0)$ in this neighborhood where 
    $D < x_0 < K$ and $0 < y_0 < \min\left\{\frac{N(x_*)}{2 \kappa^\tau }, ~N(x_0)\right\}$, then one is already in the region $R$ and all of the arguments in Theorem \ref{thm: pp} carry through verbatim. We again direct the reader to Figure \ref{fig: pp_proof}(a) for a visual illustration of this straightforward extension. 

    We now show that the point $(\tilde{x}, 0)$ where $\tilde{x} > K$ is a $v_2$-transient center. The arguments below are also illustrated in Figure \ref{fig: pp_proof}(b). Consider the region 
    \begin{align*}
        Q = \left\{(x, y) ~:~ x>K, ~y >0, ~y > N(x), ~\Delta v_2(x,y) < s \right\}.
    \end{align*}
    Let $\varepsilon >0$ be small enough so that $(x_0, y_0) = (\tilde{x} +\varepsilon, \varepsilon) \in Q$. Under the assumption that $d < \gamma K$, it was shown in Proposition 8 of \cite{streipert2022derivation} that if $x_0 >0$ and $y_0>0$, then there exists $T \in \mathbb{Z}^+_0$ such that $x_t < K$ for all $t \geq T$. Hence, starting at $(x_0, y_0) \in Q$, there exists a first time $t_K \in \mathbb{Z}^+$ such that $x_{t_K} < K$. If $\Delta v_2(x_{t_K}, y_{t_K}) \geq s$, then this means that the orbit crossed the curve $\Delta v_2(x, y) = s$ at some time $t \leq t_K < \infty$. Otherwise, if $\Delta v_2(x_{t_K}, y_{t_K}) < s$, then there must exist a finite time $t_* > t_K $ such that $\Delta v_2(x_{t_*}, y_{t_*}) \geq s$ as shown in the proof of Theorem \ref{thm: pp}. In either case, the orbit will surpass the curve $\Delta v_2(x, y) = s$ in finite time. 
\end{proof}


Our findings have a clear biological interpretation. The existence of a $v$-transient center for an observable $v$ corresponding to a population component such as the prey or predator means that, for initial conditions close to the transient center, the population can remain at a constant level for an extended but finite duration before eventually increasing either due to underlying instabilities or as a result of other features in the system’s dynamics. In particular, we have observed that although environmental conditions eventually allow population recovery due to the instability of fixed points, the local dynamics near these repelling fixed points can keep populations trapped in low-density states for prolonged periods. From a management perspective, this means that relying solely on local growth rates around a fixed point may significantly underestimate the true recovery time. Populations near such transient centers might appear permanently collapsed, even when long-term recovery is possible. Moreover, we have demonstrated in model system \eqref{model: predator-prey} that it is not just the isolated fixed-points $E_0$ and $E_K$ that exhibit this transient behavior, but the entire line $\left\{(x, 0)~:~ x\geq0 \right\}$ is made of transient centers for the predator observable. Concretely, any state within this threshold forces the predator population to linger at near-zero levels for a considerable amount of time before eventually rebounding. Ecologically, this provides a rigorous explanation for why predator populations can remain persistently low even when prey densities would otherwise support recovery.

In ecological models, the mechanism behind transient centers provides a rigorous mathematical explanation for temporary population collapse. As seen in model system \eqref{model: predator-prey}, a predator population might seem entirely wiped out for seasons or years only to reappear later, or why a prey population can hang on at vanishingly low densities before suddenly exploding. We shall see in the next subsection that similar behavior occurs in epidemic models. Disease prevalence can remain at minimal levels, nearly unnoticed, before suddenly escalating into a significant outbreak.

\subsection{Epidemic Model with Vaccination}

In this subsection, we consider a discrete-time epidemic model for a measles outbreak obtained from \cite{allen2007introduction}. The model partitions the total population ($N$) into three compartments, namely, susceptible individuals ($S$), infected individuals ($I$), and immune individuals ($R$). The dynamics between these compartments are governed by the following difference equations
\begin{align}
\begin{cases}
S(t+1) = (1-p)S(t) - \alpha S(t)I(t) + b \\[2ex]
I(t+1) =\alpha S(t) I(t) \\[2ex]
R(t+1) =R(t) + I(t) - b + pS(t)
\end{cases} \label{model: epidemic}
\end{align}
where $t$ is the time index in weeks. In the above model, $p\in [0,1)$ is the proportion of susceptible individuals vaccinated per week, $\alpha \in (0,1)$ is the disease transmission rate, and $b>0$ is the constant number of births and deaths. It is assumed that individuals infected with measles recover within a single week, implying that all observed cases each week are newly infected individuals. Moreover, newborns are assumed to enter directly into the susceptible compartment, and every individual will eventually contract the disease so that all deaths occur exclusively among recovered/vaccinated individuals. Summing all equations in system \eqref{model: epidemic}, we deduce that $N(t+1) = N(t)$, and so the population size is a constant value $N$. Since $R(t)$ does not appear explicitly in the first two equations of system \eqref{model: epidemic}, the dynamics are governed by the reduced system
\begin{align}
\begin{cases}
S(t+1) = (1-p)S(t) - \alpha S(t)I(t) + b \\[2ex]
I(t+1) =\alpha S(t) I(t).
\end{cases} \label{model: epidemic_simple}
\end{align}

We remark that $b$ needs to be sufficiently small to ensure that solutions of model system \eqref{model: epidemic} are nonnegative. If there is no vaccination, that is, $p=0$, then there exists only the endemic equilibrium $\overline{S} =1/\alpha$ and $\overline{I} = b$ which is locally asymptotically stable. Meanwhile, when $p>0$, model \eqref{model: epidemic_simple} has two equilibria, the disease-free equilibrium $E_0 = (b/p, 0)$ and the endemic equilibrium $E_* = (1/\alpha,~ \mathcal{R}_0 -1)$ provided that the basic reproduction number $\mathcal{R}_0 := \alpha b/p >1$. The disease-free equilibrium $E_0$ is unstable when $\mathcal{R}_0 > 1$ and the endemic equilibrium $E_*$ is locally asymptotically stable when $ 1<\mathcal{R}_0 < 2/p$ provided that $\alpha b < 2$ \cite{allen2007introduction}. A sample trajectory of model \eqref{model: epidemic_simple} is given in Figure \ref{fig: epidemic_trajectory}. Notice that the number of infected individuals initially declines and remains near zero for an extended period, giving the appearance that the disease has been eliminated. However, we see that it converges to a nonzero equilibrium in the long run. This extended initial phase of apparent disease absence is known as the honeymoon period \cite{mclean1988measles, kharazian2020honeymoon}.

\begin{figure}
    \centering
    \includegraphics[width=10cm]{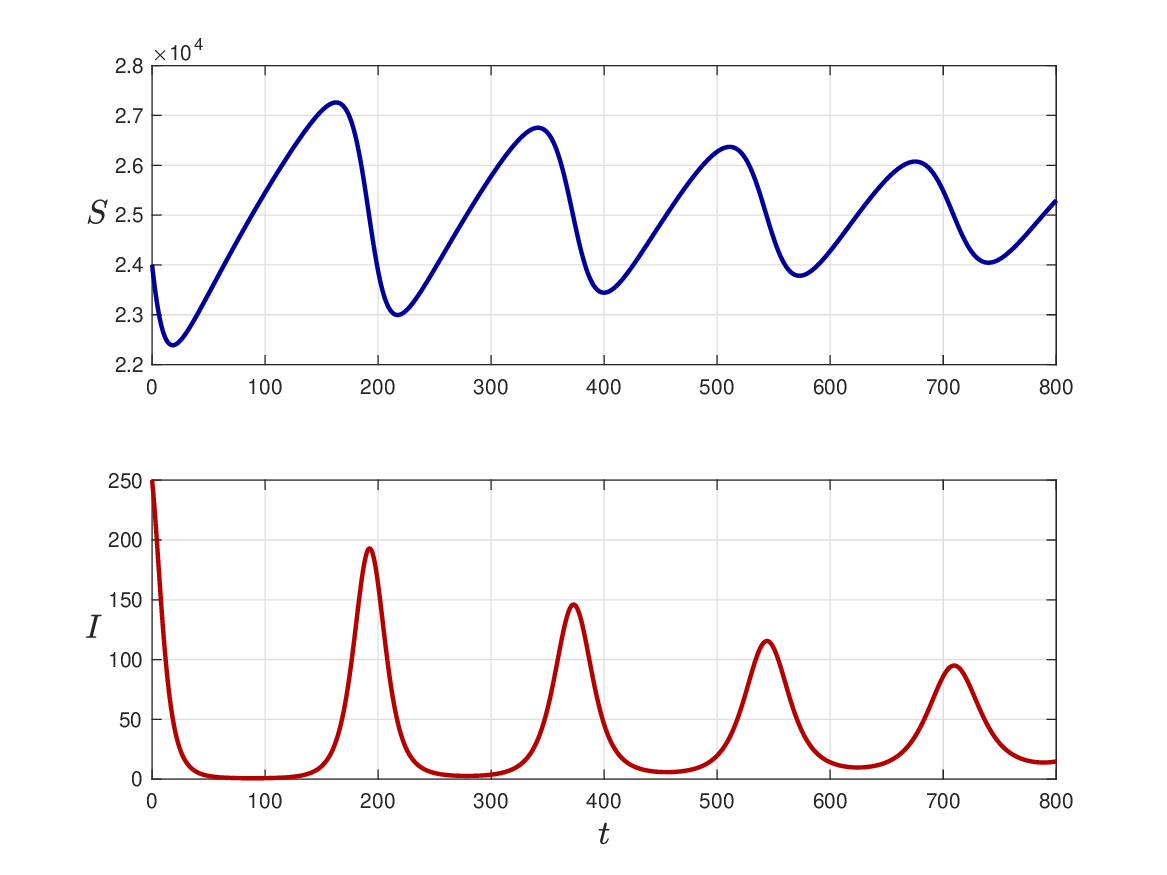}
    \caption{Trajectories of model system \eqref{model: epidemic_simple} subject to $b = 115$, $p = 0.3 \times 10^{-2}$, $\alpha = 0.4 \times 10^{-4}$, $S(0) =2.4 \times 10^4$ and $I(0) =250$. }
    \label{fig: epidemic_trajectory}
\end{figure}

 In the following, we always assume that $p >0$ and $\mathcal{R}_0>1$. We apply the theoretical framework we have developed to characterize long-lasting and slowly varying transient behaviors in model system \eqref{model: epidemic_simple} by considering the observable $v=I$ which is the number of infected individuals. We identify transient points and transient centers within the model's state space and analyze their implications for disease dynamics.

\begin{theorem}
    Let $v(S, I)=I$. Then the fixed point $E_0$ is a $v$-transient center.
\end{theorem}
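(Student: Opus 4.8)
\textit{Proof proposal.} The plan is to apply Theorem \ref{thm: nonlinear_nonorthogonality} directly to the reduced map $f(S,I) = \left((1-p)S - \alpha S I + b,~ \alpha S I\right)$ with observable $v(S,I) = I$ at the disease-free fixed point $E_0 = (b/p, 0)$. Since $f$ is polynomial it lies in $C^2(\mathbb{R}^2, \mathbb{R}^2)$ and $v$ is linear hence lies in $C^1(\mathbb{R}^2, \mathbb{R})$, so the smoothness hypotheses hold automatically. First I would compute the Jacobian
\begin{align*}
    Df(S,I) = \begin{bmatrix} (1-p) - \alpha I & -\alpha S \\ \alpha I & \alpha S \end{bmatrix}
\end{align*}
and evaluate it at $E_0$ to obtain the upper-triangular matrix $A = Df(E_0) = \begin{bmatrix} 1-p & -\alpha b/p \\ 0 & \alpha b/p \end{bmatrix}$, whose eigenvalues can be read off the diagonal as $\lambda_1 = 1-p$ and $\lambda_2 = \alpha b / p = \mathcal{R}_0$.

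Next I would identify the unstable real eigenvalue and verify the spectral hypothesis. Because $p \in (0,1)$ we have $|\lambda_1| = 1-p < 1$, while the standing assumption $\mathcal{R}_0 > 1$ gives $\lambda_2 = \mathcal{R}_0 > 1$. In particular $\lambda_2 = \mathcal{R}_0 = \rho(A)$ is the spectral radius and exceeds $1$, so condition (H2) of Theorem \ref{thm: nonlinear_nonorthogonality} holds automatically, exactly as noted in the remark following that theorem that $|\lambda|^2 > \rho(A)$ whenever $|\lambda| = \rho(A) > 1$. This bypasses any need to estimate the spectral norm $\|A\|$.

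It then remains to exhibit an eigenvector $w$ for $\lambda_2$ with $\nabla v(E_0) \cdot w \neq 0$. Solving $(A - \mathcal{R}_0 I)w = 0$ and taking the second component to be $1$ yields $w = \left(\frac{\mathcal{R}_0}{1 - p - \mathcal{R}_0},~ 1\right)$, where the denominator $1 - p - \mathcal{R}_0 < 0$ is nonzero so $w$ is well defined. Since $\nabla v(E_0) = (0,1)$, we obtain $\nabla v(E_0) \cdot w = 1 \neq 0$. With every hypothesis confirmed, Theorem \ref{thm: nonlinear_nonorthogonality} yields that $E_0$ is a $v$-transient center.

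As for obstacles, there is no substantial difficulty here: the argument collapses to a routine linear-algebra check once one recognizes that the dominant eigenvalue coincides with the spectral radius, which discharges the growth-comparison content of the theorem for free. The only point requiring a moment of care is confirming that the relevant eigenvector is not orthogonal to $\nabla v = (0,1)$, that is, that it carries a genuine infected-compartment component. The upper-triangular structure of $A$ guarantees this automatically, since any eigenvector of $\lambda_2$ must have nonzero second component (forcing $w_1 = 0$ would force $w_2 = 0$), so the non-orthogonality condition cannot fail in this model.
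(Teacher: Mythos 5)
Your proposal is correct and follows essentially the same route as the paper: compute the Jacobian at $E_0$, identify the unstable eigenvalue $\mathcal{R}_0>1$ (which, being the spectral radius, discharges hypothesis (H2)), exhibit an eigenvector with nonzero $I$-component so that $\nabla v(E_0)\cdot w\neq 0$, and invoke Theorem \ref{thm: nonlinear_nonorthogonality}. Your version simply spells out the eigenvector and the spectral-radius check more explicitly than the paper does.
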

\begin{proof}
    The Jacobian of the linearized system evaluated at $E_0$ has the eigenvalues $\lambda_1 = \mathcal{R}_0 >1$ and $\lambda_2 = 1-p <1$. The eigenvalue $\lambda_1$ has a corresponding eigenvector $w = (a, 1)$ for some nonzero constant $a$. Hence, $\nabla v(E_0) \cdot w \neq 0$ and it follows from Theorem \ref{thm: nonlinear_nonorthogonality} that $E_0$ is a $v$-transient center. 
\end{proof}

We now tackle the problem of looking for $v$-transient centers that are not fixed points. As usual, we let $\{(S_t, I_t)\}_{t \in \mathbb{Z}^+_0}$ the trajectory of model \eqref{model: epidemic_simple}. We compute that 
\begin{align*}
    \Delta S_t := S_{t+1} - S_t = b-p S_t -\alpha S_t I_t \quad \text{and} \quad 
     \Delta I_t:= I_{t+1}-I_t = \alpha I_t  \left(S_t -\frac{1}{\alpha} \right).
\end{align*}
Since $v = I$, we have $\Delta v(S_t, I_t)= \Delta I_t$. Additionally, if $I_0 =0$, then we have $I_t = 0$ for all $t \in \mathbb{Z}^+$, and so $\Delta v(S_t, I_t)=0$ for any $t \in \mathbb{Z}^+_0$ and any $S_0 \geq 0$. In particular, the set $\Gamma = \left\{(S, 0)~ :~ S\geq0\right\}$ belongs to our set of $v$-transient center candidates. Our next theorem states that all members of $\Gamma$ are indeed $v$-transient centers. This means that even though an outbreak will eventually occur since $\mathcal{R}_0 >1$, a very low number of infective individuals will remain small for a long time.

\begin{theorem} \label{thm: epidemic}
    Consider model system \eqref{model: epidemic_simple} with $v =I$. Then every point in $\Gamma = \left\{(S, 0)~ :~ S\geq0\right\}$ is a $v$-transient center.
\end{theorem}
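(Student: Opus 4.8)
The plan is to invoke Theorem \ref{thm: finite transient time}. Since every member of $\Gamma$ already lies in the candidate set $X^v$ (as noted just before the statement, $I_0=0$ forces $I_t\equiv 0$), it suffices to exhibit a single fixed threshold $S_*>0$ such that, given any base point $(S^*,0)\in\Gamma$ and any neighborhood $U$ of it, some initial condition $(S_0,I_0)\in U$ with $I_0>0$ satisfies $|\Delta v(S_t,I_t)|=\Delta I_t=\alpha I_t\left(S_t-\frac{1}{\alpha}\right)>S_*$ at a finite time. The mechanism I would exploit is the separation of timescales built into the model: when $I$ is negligible the susceptible equation reduces to the affine contraction $S_{t+1}=(1-p)S_t+b$, whose fixed point $b/p$ exceeds $1/\alpha$ precisely because $\mathcal{R}_0=\alpha b/p>1$. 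Thus $S_t$ is driven above $1/\alpha$, after which $I_{t+1}=\alpha S_t I_t$ forces $I$ to grow geometrically until the increment $\Delta I_t$ becomes macroscopic.

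Concretely, set $c_0=b/p-1/\alpha>0$ and $M^*=\max\{S^*+1,\,b/p\}$; restricting to neighborhoods of radius at most $1$ we may assume $S_0\le M^*$, and since $S_{t+1}\le(1-p)S_t+b$ one verifies $S_t\le M^*$ for all $t$. Fix a small level $I^\dagger$, depending only on the model parameters and $M^*$, with $I^\dagger<c_0 p/(2\alpha M^*)$ and $I^\dagger<(1-p)/\alpha$. The first step is to show that while $I_t<I^\dagger$ the estimate $\alpha S_t I_t\le\alpha M^* I^\dagger$ gives the comparison recursion $S_{t+1}\ge(1-p)S_t+b-\alpha M^* I^\dagger$, whose fixed point exceeds $1/\alpha+c_0/2$ by the choice of $I^\dagger$; hence $S_t$ rises above, and is thereafter trapped above, $1/\alpha+c_0/2$ after a finite time $T_1$ that is bounded uniformly for $S_0$ in a bounded range. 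The second step is to take $I_0>0$ small enough, using $I_t\le(\alpha M^*)^t I_0$, that $I_t<I^\dagger$ holds for all $t\le T_1$; this separation guarantees that $t_3=\min\{t:I_t\ge I^\dagger\}$ satisfies $t_3>T_1$. Once $S_t>1/\alpha+c_0/2$ the factor $\alpha S_t>1+\alpha c_0/2$ makes $I$ grow geometrically, so $t_3<\infty$.

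The decisive step is to read off the observable increment at time $t_3-1$. Because $t_3-1\ge T_1$ we have $S_{t_3-1}\ge 1/\alpha+c_0/2$, while $I_{t_3}=\alpha S_{t_3-1}I_{t_3-1}\ge I^\dagger$ together with $S_{t_3-1}\le M^*$ yields the two-sided control $I^\dagger/(\alpha M^*)\le I_{t_3-1}<I^\dagger$. Hence
\begin{align*}
    |\Delta v(S_{t_3-1},I_{t_3-1})|=\alpha I_{t_3-1}\left(S_{t_3-1}-\frac{1}{\alpha}\right)\ge\alpha\cdot\frac{I^\dagger}{\alpha M^*}\cdot\frac{c_0}{2}=\frac{I^\dagger c_0}{2M^*}.
\end{align*}
Taking $S_*=I^\dagger c_0/(4M^*)$, which depends only on the model parameters and the base point and is therefore uniform over all neighborhoods, gives $T_{S_*}(S_0,I_0)\le t_3-1<\infty$, and Theorem \ref{thm: finite transient time} yields the conclusion.

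I expect the main obstacle to be exactly this last quantitative coupling: one must ensure that at the instant $I$ has grown to a definite size, $S$ has not yet been drawn down toward $1/\alpha$, since there $\Delta I$ would be vanishingly small, and that the resulting lower bound $S_*$ is uniform over neighborhoods. This is what forces the ordering $t_3>T_1$ through the smallness of $I_0$ and the compatible choice of the level $I^\dagger$. An alternative, more geometric route would mirror the augmented phase-plane argument of Theorem \ref{thm: pp1}: one launches the orbit just above the invariant axis $\{I=0\}$ and shows it must cross the level curve $\Delta v=s$ before $S_t$ returns below $1/\alpha$.
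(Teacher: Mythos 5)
Your argument is sound, but it follows a genuinely different route from the paper. The paper's proof is only a sketch built on the augmented phase-plane machinery of Streipert and Wolkowicz: it chooses a level $k$ so that the curve $\Delta v(S,I)=k$ meets the nullcline $I=h(S)$ at two points, then argues via the next-iterate operators and a case analysis (C1)--(C3) on the position of $\tilde S$ relative to $1/\alpha$ and $b/p$ that the orbit must cross that level curve in finite time, mirroring the structure of Theorems \ref{thm: pp} and \ref{thm: pp1}. You instead run a quantitative timescale-separation argument: sandwich $S_t$ between the affine comparison recursions $(1-p)S_t+b-\alpha M^*I^\dagger$ and $(1-p)S_t+b$, use $\mathcal{R}_0>1$ to force $S_t$ above and then trap it above $1/\alpha+c_0/2$ while $I_t<I^\dagger$, let $I_t$ grow geometrically to the fixed level $I^\dagger$, and read off an explicit lower bound $S_*=I^\dagger c_0/(4M^*)$ on the increment at time $t_3-1$. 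I checked the key estimates: the ordering $t_3>T_1$ follows from $I_t\le(\alpha M^*)^tI_0$ with $I_0$ small; the trapping holds because the comparison fixed point $(b-\alpha M^*I^\dagger)/p$ exceeds $1/\alpha+c_0/2$ under your constraint on $I^\dagger$; the condition $I^\dagger<(1-p)/\alpha$ keeps $S_t$ positive; and the two-sided control $I^\dagger/(\alpha M^*)\le I_{t_3-1}<I^\dagger$ is consistent since $\alpha M^*\ge\mathcal{R}_0>1$. What each approach buys: the paper's geometric route gives phase-plane intuition and reuses the predator--prey scaffolding, but is left as a sketch; your route is self-contained, needs no case split on $\tilde S$ (the affine contraction pulls $S$ toward $b/p$ from either side, so $T_1=0$ when $S_0$ is already large), and produces explicit, neighborhood-uniform constants. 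The only touch-up needed is to note explicitly that you must choose $S_0>0$ in the neighborhood (if $S_0=0$ then $I_1=0$ and the orbit is absorbed into the invariant axis), which is always possible since neighborhoods are open.
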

\begin{proof} [Sketch of the Proof]
    We only give an outline of the proof since the main ideas are similar to the proofs of Theorems \ref{thm: pp} and \ref{thm: pp1}. We see from the augmented phase portrait shown in Figure \ref{fig: epidemic_augmented}(a) that the dynamics of epidemic model \eqref{model: epidemic_simple} share some similarities with the predator-prey model \eqref{model: predator-prey} in the previous subsection. The $S$ nullcline  $I = h(S) := \frac{b-pS}{\alpha S}$ intersects the $I=0$ axis at the disease-free equilibrium $E_0 =(b/p, 0)$, and the nontrivial $I$ nullcline $S = 1/\alpha$ at the endemic equilibrium $E_* = (1/\alpha, ~\mathcal{R}_0-1)$ since $\mathcal{R}_0 = \alpha b/p>1$, that is, $1/\alpha < b/p$. In addition, the nullclines $I= h(S)$ and $S =1/\alpha$ divide the first quadrant into four regions based on the component-wise monotonicity. In Figure \ref{fig: epidemic_augmented}, an arrow pointing to the northeast (resp., southeast) represents $S_{t+1} - S_t >0$ and $I_{t+1}-I_t >0$ (resp., $I_{t+1} - I_t <0$), and an arrow pointing to the northwest (resp., southwest) means that $S_{t+1} - S_t <0$ and $I_{t+1}-I_t >0$ (resp., $I_{t+1} - I_t <0)$. We remark that we added the implicit curve 
    \begin{align*}
        g(S_t, I_t) := (1-p)S_t -\alpha S_t I_t +b =0
    \end{align*}
    since the $S_t$ values can become negative, that is, $S_{t+1} <0$ when $g(S_t, I_t) <0$. This happens when $(S_t, I_t)$ falls above the graph of $g(S, I) =0$.

    We reiterate that the strategy we have done in Theorems \ref{thm: pp} and \ref{thm: pp1} works for the model system \eqref{model: epidemic_simple}. The succeeding claims are best understood by consulting the diagram in Figure \ref{fig: epidemic_augmented}(b).
    Let $(\tilde{S}, 0) \in \Gamma$ and let $U$ be an arbitrary neighborhood of $(\tilde{S}, 0)$. We choose a $k>0$ so that the curve $\Delta v(S, I) = k$ intersects the nullcline $I = h(S)$ in the first quadrant at two points, say $(S_-, I_-)$ and $(S_+, I_+)$. One can verify that any $k$ satisfying 
    \begin{align*}
        0 < k < \frac{b}{\mathcal{R}_0} \left(\sqrt{\mathcal{R}_0}-1\right)^2
    \end{align*}
    will work. We know that the curve $\Delta v(S, I) = k$ has a vertical asymptote at $S = 1/\alpha$ and a horizontal asymptote at $I=0$. Moreover, $\Delta v(S, I) = k$ lies below the nullcline $I = h(S)$ for $S \in (S_-, S_+)$ and above it for $S \notin [S_-, S_+]$. As usual, we need an initial point $(S_0, I_0) \in U$ where the trajectory $(S_t, I_t)$ hits or crosses the curve $\Delta v(S, I) = k$ in finite time. We discuss briefly the construction of such initial point depending on the location of $\tilde{S}$.
    \begin{enumerate}
        \item [(C1)] $0 \leq\tilde{S} \leq 1/\alpha$; ~ The dynamics assure us that if we pick $(S_0, I_0) \in U$ where $0 < S_0 < 1/\alpha$, and $I_0>0$ sufficiently small, the trajectory will enter the region
        \begin{align*}
            P = \left\{(S, I)~:~ S>1/\alpha,~ 0< I < h(S)\right\}.
        \end{align*}
        In the region $P$, both $S_t$ and $I_t$ are increasing. Furthermore, if $S_t \leq b/p$, then
        \begin{align*}
            S_{t+1} < (1-p)S_t + b \leq (1-p)\frac{b}{p} + b = \frac{b}{p} \quad \text{and} \quad I_{t+1} = \alpha S_t I_t \leq \mathcal{R}_0 I_t.
        \end{align*}
        Thus, the trajectory will eventually exit $P$ via the nullcline $I = h(S)$. We fine-tune $I_0 >0$ if necessary so that we stay in the region $P$ until $S_t$ has climbed past $S_-$. We can achieve this since $I_t$ grows by at most $\mathcal{R}_0$ in each iterate. Thus, the trajectory is guaranteed to surpass the curve $\Delta v(S, I) =k$ either before or after exiting the region $P$.

        \item [(C2)] $1/\alpha < \tilde{S} < b/p$; ~This case is also covered in (C1). We initialize at $(S_0, I_0) \in U \cap P$ where $S_0 = \tilde{S} >0$ and $I_0$ is small enough so that  all requirements needed in (C1) are satisfied.

        \item [(C3)] $ \tilde{S} \geq b/p$; ~We initialize at a point $(S_0, I_0) \in U$ where $S_0 > b/p$, $I_0>0$, and $\Delta v(S_0, I_0) < k$. In this region, note that $S_t$ is strictly decreasing and $I_t$ is strictly increasing. Hence, the trajectory will cross the curve $\Delta v(S, I) =k$ irrespective of whether the trajectory crosses the line $S=b/p$ beforehand or afterward.
    \end{enumerate}
\end{proof}

\begin{figure}
\centering
\subfloat[]{%
\resizebox*{7cm}{!}{\includegraphics{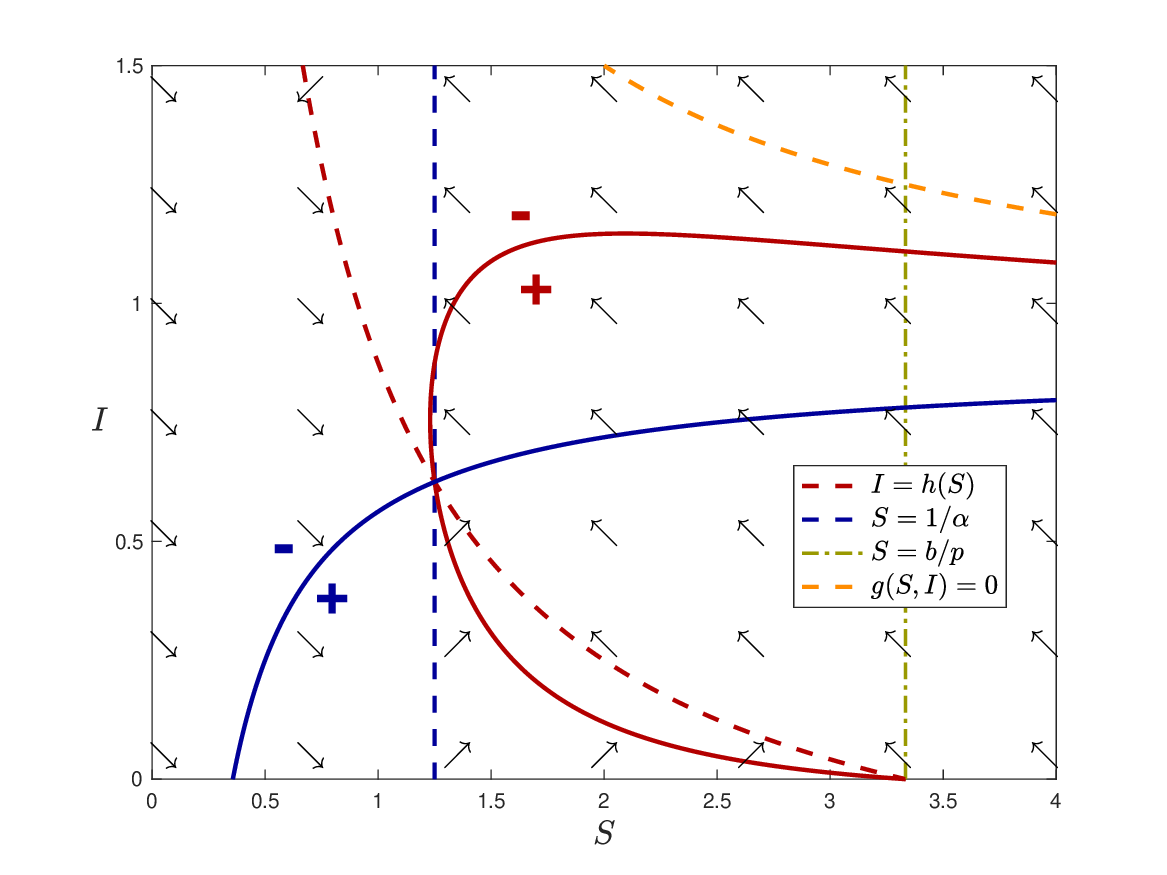}}}\hspace{5pt}
\subfloat[]{%
\resizebox*{7cm}{!}{\includegraphics{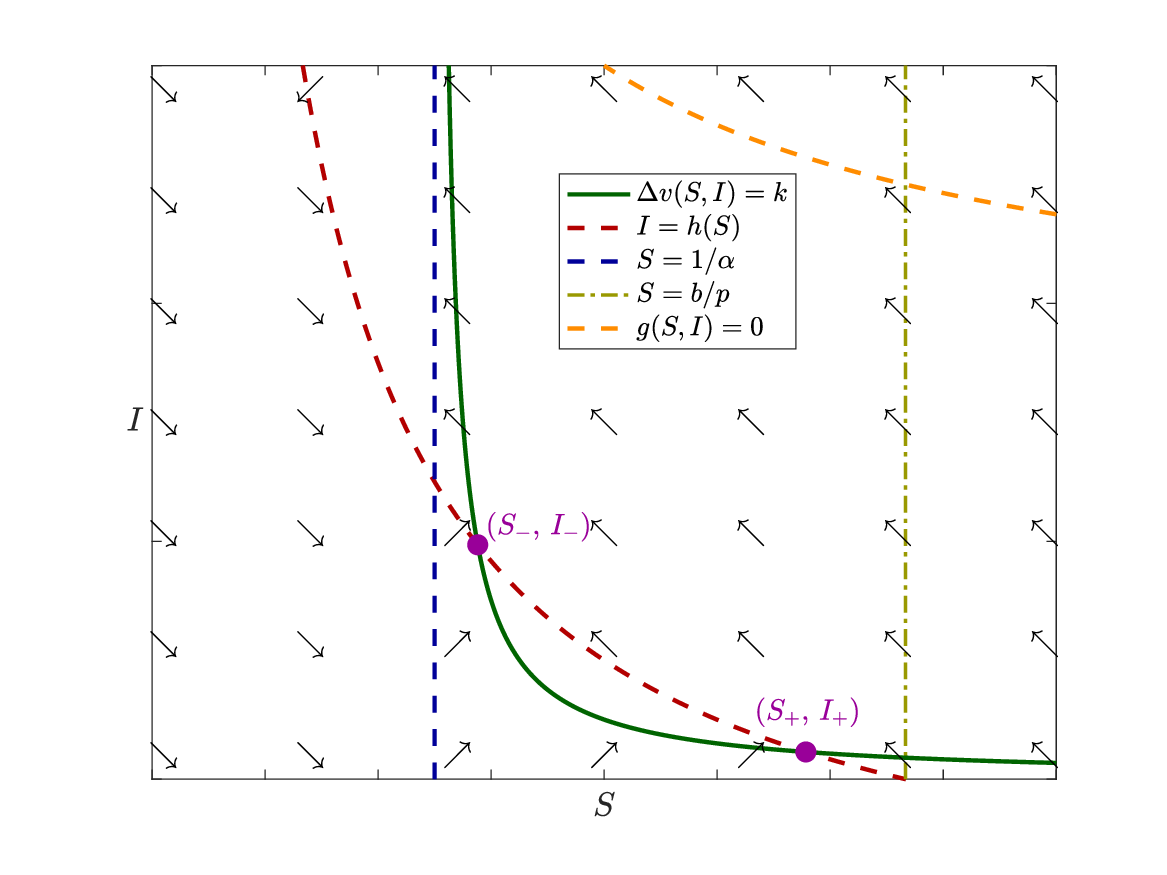}}}
\caption{ The augmented phase portrait based on \cite{streipert2023augmented} for the epidemic model \eqref{model: epidemic_simple} with parameters $p=0.3$, $\alpha =0.8$ and $b=1$. The $S$ and $I$ nullclines are the red and blue dashed curves, respectively. The blue (resp., red) $'+'$ symbol indicates that the next-iterate lies to the right (resp., above) of the blue (resp., red) dashed curve. The blue (resp., red) $'-'$ sign tells us that the next-iterate is located to the left (resp., below) of the blue (resp., red) dashed curve. Points above the orange dashed curve result in negative values which are biologically irrelevant. (b) Illustration of the proof strategy applied in Theorem \ref{thm: epidemic}.
} \label{fig: epidemic_augmented}
\end{figure}

\section{Summary and Future Work}

In this paper, we developed a comprehensive mathematical framework to identify, characterize, and analyze long-lasting and slowly varying transient dynamics in discrete-time model systems. While many previous studies have emphasized stable long-term dynamics like equilibria and periodic cycles, we focused on a specific type of long transient dynamics where a system’s observable remains nearly constant for extended periods before experiencing a sudden change. By extending existing theories from continuous-time models, we introduced precise definitions of transient points and transient centers tailored to discrete-time systems. We also established rigorous conditions and criteria for detecting these structures. In particular, we provided some characterizations of fixed points as transient centers. Lastly, the practical application of the theoretical framework is demonstrated through a detailed analysis of specific examples from predator-prey and epidemic models. The results we obtained provide some important biological insights. They help explain some well-known transient phenomena such as temporary population collapses and the honeymoon period of a disease.

Our current research efforts are concentrated on two main aspects. The first is adapting the concept of reachability which is an important property that makes the transient dynamics generated by transient centers attainable from other points in the state space to discrete-time dynamical systems. In the original formulation of Liu and Magpantay \cite{liu2022quantification}, reachable transient points and reachable transient centers for ODE systems are defined using the backward-time trajectories of the ODE. In this way, if a point in the state space arbitrarily close to a $v$-transient center is a $(v,s, T)$-transient point when you run time forwards and when you run time backwards, then this would enable you to construct an entrance point outside any neighborhood of the given $v$-transient center. The formal, rigorous definition of a reachable $(v,s,T)$-transient point and of a reachable $v$-transient center for ODE systems can be found in \cite{liu2022quantification, liu2023framework}. Several mathematical properties of these points are also established in \cite{liu2023framework}. Of course, unlike invertible flows generated by ODEs, we know that discrete-time maps may not admit a global inverse. Thus, a direct translation to discrete-time systems of this reverse flow requirement is impossible. We are currently developing and investigating a new and purely forward-time definition of reachability that captures the same spirit of attainable transient centers. We emphasize that establishing reachability in discrete-time systems is not merely a formal extension. It ensures that realistic trajectories starting from generic initializations can enter and stay in slow regions for a long duration of time.

The second primary aspect we intend to explore in future research involves transient centers that are not fixed points. Even within the continuous-time setting, the theory on non-equilibrium transient centers remains underdeveloped. Currently, our primary tool for characterizing such transient centers is given in Theorem \ref{thm: finite transient time}. Despite the usefulness of this result in proving various theoretical criteria, it exhibits several drawbacks when applied to concrete examples as demonstrated in the last section. Specifically, verifying the finite transient time condition often requires intricate and careful constructions within the system dynamics which makes the application of this result challenging. 
This motivates a parallel line of work to improve and develop simple, easily verifiable criteria for transient centers, both equilibrium and non-equilibrium, so that identification does not rely on delicate constructions and is accessible to applications.

\section*{Disclosure Statement}
The authors report there are no competing interests to declare.

\section*{Funding}
This work was supported by the Natural Sciences and Engineering Research Council of Canada (NSERC) Discovery Grant Program.

\bibliographystyle{tfs}
\bibliography{references}

\end{document}